\newcommand\reallywidehat[1]{%
\savestack{\tmpbox}{\stretchto{%
  \scaleto{%
    \scalerel*[\widthof{\ensuremath{#1}}]{\kern-.6pt\bigwedge\kern-.6pt}%
    {\rule[-\textheight/2]{1ex}{\textheight}}
  }{\textheight}%
}{0.5ex}}%
\stackon[1pt]{#1}{\tmpbox}%
}
\renewcommand{\eqref}[1]{(\ref{#1})}   
\numberwithin{equation}{section}
\theoremstyle{plain}
\newtheorem{theorem}{Theorem}[section]
\newtheorem{lemma}[theorem]{Lemma}
\newtheorem{corollary}[theorem]{Corollary}
\newtheorem{proposition}[theorem]{Proposition}
\newtheorem{conjecture}[theorem]{Conjecture}
\theoremstyle{definition}
\newtheorem{definition}[theorem]{Definition}
\newtheorem{remark}[theorem]{Remark}
\theoremstyle{definition}
\newcommand{\Q}{{\mathbbm Q}}
\newcommand{\Z}{{\mathbbm Z}}
\newcommand{\C}{{\mathbbm C}}
\newcommand{\N}{{\mathbbm N}}
\newcommand{\F}{{\mathbbm F}}
\newcommand{\ie}{\;\textit{i}.\textit{e}.\;\;} 
\title[irreducibility of eventually $2$-periodic curves]{Irreducibility of eventually $2$-periodic curves in the moduli space of cubic polynomials}
\author{Niladri Patra}
\address{School of Mathematics, Tata Institute of Fundamental Research, Homi Bhabha Road, Navy Nagar, Colaba, Mumbai-400005}
\email{niladript@gmail.com, niladri@math.tifr.res.in}
\subjclass[2020]{Primary 11R09, Secondary 37F12, 37P45}
\begin{document}

\maketitle

\begin{abstract}
    Consider the moduli space, $\mathcal{M}_{3},$ of cubic polynomials over $\C$, with a marked critical point. Let $\mathscr{S}_{k,n}$ be the set of all points in $\mathcal{M}_{3}$ for which the marked critical point is strictly $(k,n)$-preperiodic. Milnor conjectured that the affine algebraic curves $\mathscr{S}_{k,n}$ are irreducible, for all $k \geq 0, n>0$. In this article, we show the irreducibility of eventually $2$-periodic curves, i.e. $\mathscr{S}_{k,2},\; k\geq 0$ curves. We also note that the curves, $\mathscr{S}_{k,2},\; k\geq 0$, exhibit a possible splitting-merging phenomenon that has not been observed in earlier studies of $\mathscr{S}_{k,n}$ curves. Finally, using the irreducibility of $\mathscr{S}_{k,2}$ curves, we give a new and short proof of Galois conjugacy of unicritical points lying on $\mathscr{S}_{k,2}$, for even natural number $k$.   
\end{abstract}

\section{Introduction}
    Complex dynamics (also called holomorphic dynamics) is the study of self-iterates of rational functions on the Riemann sphere. Spaces of rational functions (in particular, polynomials) of fixed degree have been an integral part of complex dynamics since $1980s$ (\cite{BM81}, \cite{BH88}, \cite{M90}, \cite{BDK91}, \cite{BH92}, \cite{MP92}, \cite{R06}, \cite{LQ07}, \cite{inbook}, \cite{BKM10}, \cite{Milnor+2014+15+24}). It began with the moduli space of quadratic polynomials and its subsets of dynamical nature, like the Mandelbrot set. In two consecutive papers (\cite{BH88}, \cite{BH92}), Branner and Hubbard pioneered the study of the moduli space of cubic polynomials. Later in a $1991$ preprint, published as \cite{inbook}, Milnor restructured this moduli space by considering cubic polynomials with a \emph{marked} critical point. This space is a ramified $2$ to $1$ cover of the moduli space of cubic polynomials. In this article, we will partially answer a question, raised by Milnor (\cite{inbook}), about the irreducibility of some dynamically interesting affine algebraic curves in the moduli space of cubic polynomials with a marked critical point.
    
    Let $f$ be a polynomial over $\C$. For any integer $m\geq 0$, we denote the iteration of $f$ with itself $m$ times, as $f^{m}$, $\ie f^{0}= Id,\; f^{m}= f^{m-1} \circ f,\text{ for all } m\in \N$. For $x\in \C$, the \textit{forward orbit} of $x$ is defined to be the set of images of $x$ under all the iterates of $f$, $\ie \{f^{m}(x)\; |\;m \geq 0\}$.

    A point $x\in \C$ is called a \emph{periodic point} of period $n$ if $f^{n}(x)=x$. It is called \emph{strictly} $n$-\emph{periodic point} if $n$ is the smallest positive integer for which $f^{n}(x)=x$. A point $x\in \C$ is called a $(k,n)$-\emph{preperiodic point} if $f^{k}(x)$ is a periodic point of period $n$, $\ie f^{k+n}(x)=f^{k}(x)$. It is called \emph{strictly} $(k,n)$-\emph{preperiodic point} if $f^{k}(x)$ is strictly $n$-periodic and $f^{l}(x)$ is not periodic for any $0\leq l < k$. 

    For a polynomial $f\in \C[z]$, the roots of the derivative of $f$ are called the \emph{finite critical points} of $f$. Let us consider the set $S_{3}$ of all cubic polynomials over $\C$, with a marked (finite) critical point. Two polynomials that are affine conjugate to each other, exhibit the same dynamical behaviour. So, we consider the quotient space of $S_{3}$, by identifying polynomials that are affine conjugate to each other, and the affine conjugation map sends the marked critical point of the first to the marked critical point of the latter. This space, $\mathcal{M}_{3}$, is called \emph{the moduli space of cubic polynomials with a marked critical point}. In other words, $\mathcal{M}_{3}$ is the space 
    $$\{(f,a)\;|\; f \in S_{3}, a \text{ is a finite critical point of } f\}/ \sim ,$$ 
    where the equivalence relation $\sim$ is given by $(f,a) \sim (g,a')$ iff there is $h\in Aut(\C)$ such that $h\circ f\circ h^{-1}= g$ and $h(a)=a'$.

    A polynomial in $\C[z]$ is called \emph{monic} if its leading coefficient is one, and called \emph{reduced} (or, \emph{centered}) if the sum of its roots is zero. Observe that, any polynomial is affine conjugate to a monic, reduced polynomial. Hence, $\mathcal{M}_{3}$ can be seen as set of affine conjugacy classes of monic, reduced, cubic polynomials over $\C$, with a marked critical point. From \cite{inbook}, every monic, reduced cubic polynomial, with a marked critical point, can be written in the \emph{modified Branner-Hubbard normal form} as,
    \begin{equation}\label{maineq}
        f_{a,b}(z)=z^{3}- 3a^{2}z +2a^{3} +b,
    \end{equation} 
    with $\pm a$ as its finite critical points, $a$ is the marked critical point and $f(a)=b$ is a finite critical value. Let $a,b,a',b' \in \C$. Brief calculation shows that, $f_{a,b}$ and $f_{a',b'}$ are affine conjugate to each other iff either $(a,b)=(a',b')$ or $(a,b)=(-a',-b')$. Hence, the moduli space $\mathcal{M}_{3}$ can be identified as, 
    $$\mathcal{M}_{3} \longleftrightarrow \C^{2}/\left((a,b)\sim(-a,-b)\right).$$ 
    The space $\C^{2}/\left((a,b)\sim(-a,-b)\right)$ is isomorphic to the image of $\C^{2}$ under the affine Veronese map, $\; \C^{2} \rightarrow \C^{3},\; (a,b) \mapsto (a^{2}, ab, b^{2})$. Hence, $\mathcal{M}_{3} = \C^{2}/\left((a,b)\sim(-a,-b)\right)$ is a $2$-dimensional affine variety.

    Let $k \geq 0$ and $n >0$ be integers. Consider the set, $\mathscr{S}_{k,n}$, of all points $(a,b)\in \mathcal{M}_{3}$ such that the marked critical point $a$ is strictly $(k,n)$-preperiodic under the polynomial map $f_{a,b}$. The set $\mathscr{S}_{k,n}$ is an affine algebraic curve in $\mathcal{M}_{3}$. Milnor \cite{inbook} conjectured that the $\mathscr{S}_{0,n}, n\in \N$ curves are all irreducible. In general, it is conjectured that,
    \begin{conjecture}
        For any choice of integers $k\geq 0$ and $n>0$, the curve $\mathscr{S}_{k,n}$ is irreducible.
    \end{conjecture}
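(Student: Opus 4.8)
The plan is to reduce Conjecture to an explicit polynomial-irreducibility statement and then attack that statement by a monodromy argument anchored in Thurston rigidity, isolating at the end the one combinatorial input that remains genuinely open for general $n$.

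First I would set up the defining polynomial. Using $f_{a,b}(a)=b$, put $\Phi_{k,n}(a,b):=f_{a,b}^{\,k+n}(a)-f_{a,b}^{\,k}(a)\in\Z[a,b]$. Its zero locus contains the strict $(k,n)$-locus together with every $(l,m)$-locus with $l\le k$ and $m\mid n$, so by an inclusion--exclusion (a M\"obius inversion over the poset of preperiodic portraits $(l,m)\prec(k,n)$) one extracts a reduced polynomial $p_{k,n}\in\Z[a,b]$ whose zero set is exactly the strict $(k,n)$-locus, and $\Sigma_{k,n}$ is the image of $\{p_{k,n}=0\}$ in $\mathcal{M}_{3}$. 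Since $f_{-a,-b}(z)=-f_{a,b}(-z)$, the involution $(a,b)\mapsto(-a,-b)$ preserves preperiodic type, so $p_{k,n}$ is equivariant and, the quotient map $\C^{2}\to\mathcal{M}_{3}$ being finite, it suffices to prove that $\{p_{k,n}=0\}\subset\C^{2}$ is irreducible over $\C$ (or, more weakly, that the involution permutes its irreducible components transitively).

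Next I would pass to monodromy. Consider $\pi\colon\{p_{k,n}=0\}\to\C$, $(a,b)\mapsto a$; for generic $a$ the fibre is the finite set of $b$ making the marked critical orbit of $f_{a,b}$ strictly $(k,n)$-preperiodic, of cardinality $d_{k,n}=\deg_{b}p_{k,n}$. Then $p_{k,n}$ is irreducible precisely when the monodromy $\rho\colon\pi_{1}(\C\setminus B,\ast)\to\mathrm{Sym}(d_{k,n})$ of this branched cover acts transitively, where $B$ is the finite discriminant locus. To compute $\rho$ I would degenerate along the $a$-line: at $a=0$ one meets the unicritical slice $z^{3}+b$, whose strictly $(k,n)$-preperiodic parameters are controlled by the kneading/external-angle combinatorics of the cubic Multibrot set; as $a\to\infty$ the free critical point $-a$ escapes and the Branner--Hubbard wringing deformation labels the branches of $\{p_{k,n}=0\}$ by the landing data of the post-critically finite cusps of the curve. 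By Thurston rigidity each combinatorial datum is realized uniquely, identifying the generic fibre with an explicit finite set $\mathcal{C}_{k,n}$ on which each cusp acts by an explicit permutation (a transposition or a cyclic rotation read off from the local bifurcation).

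The decisive step is then purely combinatorial: show that the cusp permutations generate a transitive subgroup of $\mathrm{Sym}(\mathcal{C}_{k,n})$ for \emph{every} $k\ge 0$ and $n>0$. This is exactly where I expect the main obstacle to lie, and where the present paper's argument halts at $n=2$. The set $\mathcal{C}_{k,n}$ is stratified by the cyclic structure of the period-$n$ cycle onto which the marked orbit lands, and the \emph{splitting--merging phenomenon} means the cusps need not, a priori, connect these cyclic blocks; when $n$ is an odd prime the block structure is incompatible with the mechanism that succeeds for $n=2$, as the final section of this paper demonstrates. A full proof therefore requires a new, uniform connectivity statement: that the graph on $\mathcal{C}_{k,n}$ whose edges are the admissible one-parameter deformations (wringing paths together with the elementary bifurcations inside the period-$n$ copies of the Mandelbrot set) is connected for all $n$ at once. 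Reducing this to the transitivity of a suitable group acting on cyclically structured kneading sequences, and settling that transitivity uniformly in $n$, is the step that Conjecture, in full generality, still demands and that I regard as the genuine hard core of the problem.
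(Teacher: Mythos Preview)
The statement you are addressing is a \emph{conjecture}; the paper does not prove it in general. It establishes only the case $n=2$ (Theorem~\ref{irrck23}), building on the $n=1$ case from \cite{article1}, and Section~\ref{kqcurves} is devoted to showing that the paper's own method \emph{fails} for odd primes $n=q$. The paper's route is entirely arithmetic: it proves $h_{k,2}$ is irreducible over $\Q$ via a generalised $3$-Eisenstein criterion (Theorem~\ref{m}) with respect to $h_{1,2}=(b-a)^2+1$, then upgrades to $\C$ using a smooth $\Q[i]$-rational point and Corollary~\ref{p}; when $h_{k,2}$ happens to split into two factors over $\Q[i]$, these are swapped by $(a,b)\mapsto(-a,-b)$ and hence merge in $\mathcal{M}_3$. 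Your monodromy strategy---projecting $\{p_{k,n}=0\}$ to the $a$-line and asking for transitivity of $\pi_1(\C\setminus B)$ on the fibre---is a genuinely different and in principle more uniform attack, and your parenthetical remark that one only needs the involution to act transitively on components (rather than irreducibility in $\C^2$) correctly anticipates the splitting--merging phenomenon the paper exhibits.

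However, your proposal is not a proof but a programme with a self-declared hole. You explicitly identify the ``decisive step''---transitivity of the cusp permutations on $\mathcal{C}_{k,n}$ for all $n$---as open, and call it ``the genuine hard core of the problem''. Beyond that, several of the intermediate assertions are themselves substantial and unjustified here: the bijection between the generic fibre and an explicit combinatorial set via Thurston rigidity, the claim that local monodromies at the cusps are transpositions or rotations readable from the bifurcation type, and the well-definedness of the wringing-path labelling as $a\to\infty$ all require real analytic work that you only gesture at. So what you have written neither settles the conjecture nor recovers the paper's $n=2$ theorem; it is an outline whose hardest step, by your own account, remains undone.
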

    Buff, Epstein, and Koch (\cite{article1}) proved this conjecture for $\mathscr{S}_{k,1}$ curves. Arfeux and Kiwi (\cite{AK20}) have shown that $\mathscr{S}_{0,n}$ curves are irreducible, for any $n \in \N$. In this article, we will prove this conjecture for $\mathscr{S}_{k,2}$ curves, for any non-negative integer $k$ (see Theorem \ref{irrck23}). We state the theorem below.
    \begin{theorem}\label{main1}
        For any non-negative integer $k$, the curve $\mathscr{S}_{k,2}$ is irreducible.
    \end{theorem}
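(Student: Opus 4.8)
The plan is to reduce Theorem \ref{main1} to the absolute irreducibility of one explicit plane curve, in the spirit of the $\Sigma_{k,1}$ argument of Buff--Epstein--Koch but adapted to the internal symmetry of a $2$-cycle. First one builds a rational model for the space of cubics with a marked $2$-cycle. If $\{w,w'\}$ is a genuine $2$-cycle of $f_{a,b}$, put $s=w+w'$ and $p=ww'$; then the relations $f_{a,b}(w)+f_{a,b}(w')=w+w'$ and $\bigl(f_{a,b}(w)-f_{a,b}(w')\bigr)/(w-w')=-1$, together with $w^3+w'^3=s^3-3ps$, collapse to $p=s^2-3a^2+1$ and $b=s^3-(3a^2-2)s-2a^3=:b(a,s)$. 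Crucially $b$ is an \emph{explicit polynomial in $(a,s)$}, so the space of pairs $(f_{a,b},\ \text{marked }2\text{-cycle})$ is rational with coordinates $(a,s)\in\C^2$, the marked cycle is the root set of $X^2-sX+(s^2-3a^2+1)$, and the moduli involution $(a,b)\mapsto(-a,-b)$ becomes $(a,s)\mapsto(-a,-s)$. The locus where the marked critical orbit meets the marked cycle at time $k$ is then a \emph{single} polynomial equation $R_k(a,s):=F_k^2-sF_k+(s^2-3a^2+1)=0$, where $F_k:=f_{a,b(a,s)}^{\,k}(a)\in\C[a,s]$ is the $k$-th iterate of $f_{a,b(a,s)}$ evaluated at $a$. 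Since for a point of $\Sigma_{k,2}$ the $2$-cycle containing $f^k(a)$, hence $s=f^k(a)+f^{k+1}(a)$, is uniquely determined, the ``new part'' $R_k^{*}$ of $R_k$ — namely $R_k$ with the factors already contained in $R_0,\dots,R_{k-1}$ removed — defines a curve birational to the preimage $\widetilde\Sigma_{k,2}\subset\C^2$ of $\Sigma_{k,2}$ under $\C^2\to\mathcal{M}_3=\C^2/{\sim}$. Hence it suffices to prove that $R_k^{*}$ is absolutely irreducible, or that it has exactly two absolutely irreducible factors interchanged by $(a,s)\mapsto(-a,-s)$; either way $\Sigma_{k,2}$ is irreducible.

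Next, to break the $w\leftrightarrow w'$ symmetry one passes to the double cover obtained by adjoining $\delta$ with $\delta^2=s^2-4p=12a^2-3s^2-4=:\Delta$; this cover is an irreducible rational surface, on it $w,w'=(s\pm\delta)/2$, and $R_k$ factors as $S_k\cdot S_k^{\sigma}$ where $S_k(a,s,\delta):=F_k-(s+\delta)/2$ and $\sigma:\delta\mapsto-\delta$ is the deck involution. The problem becomes: the ``new part'' $S_k^{*}$ is irreducible on the double cover, or it splits into two pieces exchanged by a lift of the moduli involution. I expect this to be the precise form of the \emph{splitting--merging} phenomenon from the abstract: the two choices of periodic point can make the curve visibly split inside $\C^2_{a,b}$ while its two halves are glued together in $\mathcal{M}_3$. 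Nothing like this occurs for $\Sigma_{k,1}$, where one marks a fixed point directly, so the analogue of the first step already yields a rational model in coordinates $(a,\zeta)$ with no sign ambiguity; and it is exactly this step that fails for $\Sigma_{k,q}$ with $q$ an odd prime, since for $q\ge 3$ the defining relations of a marked $q$-cycle are no longer linear in $b$ and do not produce a rational model over $\C(a)$ this way.

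To prove irreducibility of $S_k^{*}$ I would argue by induction on $k$. The base cases $k=0,1$ are handled by direct computation; for $k=0$ one checks that $\widetilde\Sigma_{0,2}$ is the smooth, irreducible, involution-invariant conic $b^2+ab-2a^2+1=0$. For the inductive step I would set up the dynamical ``unfolding'' correspondence relating $\{S_k^{*}=0\}$ to $\{S_{k-1}^{*}=0\}$ by pulling the constraint $F_k=(s+\delta)/2$ back one iterate of $f$, show that the induced map is finite of controlled degree, and prove connectivity of its total space by locating the branch points — most of which lie over the conic $\Sigma_{0,2}$, where $a$ is genuinely $2$-periodic, and over the cusps at infinity of $\mathcal{M}_3$ — and checking that the local monodromy there already acts transitively on the sheets. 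An alternative, more arithmetic route is to regard $S_k^{*}$ as a polynomial in $s$ over $\C(a)$ and force irreducibility by a Newton-polygon/Eisenstein-type criterion at a carefully chosen place of $\C(a)$, for instance at a root of $\mathrm{Res}_s(R_k,R_{k-1})$ where exactly one short period unfolds.

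The main obstacle will be this last step: controlling the monodromy uniformly in $k$ — equivalently, ruling out an unexpected factorization of $S_k^{*}$ — and in doing so correctly diagnosing the splitting--merging so as neither to over- nor under-count the components of $\widetilde\Sigma_{k,2}$. Everything preceding it is essentially bookkeeping with explicit polynomials; the irreducibility of the dynatomic-type curve $S_k^{*}$ is the real content, as in the $n=1$ case, but made delicate here by the symmetry internal to a $2$-cycle.
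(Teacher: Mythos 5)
Your setup is internally consistent and genuinely different from the paper's: the paper never passes to a marked-cycle model, but instead works directly with $f_{k,2}=f^{k+2}(a)-f^{k}(a)$, divides out the lower factors to form $h_{k,2}\in\Z[a,b]$, and proves irreducibility \emph{arithmetically} — a generalised $3$-Eisenstein criterion relative to $h_{1,2}=(b-a)^{2}+1$ (exploiting $f\equiv z^{3}-a^{3}+b \pmod 3$, so iteration is essentially Frobenius mod $3$), a resultant computation along the line $b=a\mp i$ showing $\mathrm{Res}(h_{k,2},h_{1,2})$ is $9(4a^2+1)$ or $9a^2$, and then an ascent from $\Q[i]$-irreducibility to $\C$-irreducibility via a smooth $\Q[i]$-rational point. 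The splitting--merging you anticipate is real, but in the paper it is the splitting of $h_{k,2}$ into a $\Q[i]$-conjugate pair $t_{k,2}\cdot\bar t_{k,2}$, glued in $\mathcal{M}_3$ because $h_{k,2}$ is even and $t_{k,2}(-a,-b)=\bar t_{k,2}(a,b)$; that is not obviously the same splitting as your deck involution $\delta\mapsto-\delta$ on the $2$-cycle double cover, and you would have to determine which one actually occurs.

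The genuine gap is that the one step carrying all the content — absolute irreducibility of $R_k^{*}$ (or $S_k^{*}$) uniformly in $k$ — is exactly the step you defer. Neither of your two proposed routes is carried out: for the monodromy route you do not identify the branch points, compute any local monodromy, or explain why it should act transitively on an unbounded number of sheets (this is precisely the kind of statement that is hard to make uniform in $k$, and no analogue of it is known here); for the ``Eisenstein at a place of $\C(a)$'' route you do not name the place or verify any hypothesis, and it is unclear that a place of $\C(a)$ can see the congruence structure that makes the argument work — the paper's criterion lives at the prime $3$ of $\Z$, where $f^{k}(b)-f^{k-2}(b)\equiv (b-a)^{3^{k-1}}((b-a)^2+1)^{3^{k-1}}$, and that Frobenius collapse has no counterpart over $\C(a)$. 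There are also smaller unresolved points (your removal of ``factors contained in $R_0,\dots,R_{k-1}$'' does not obviously account for the $(l,1)$-preperiodic locus, which in your model sits on the degenerate divisor $\Delta=0$ where the $2$-cycle collapses to a fixed point of multiplier $-1$), but the missing irreducibility argument is the essential one: as written, the proposal reformulates the theorem rather than proving it.
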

    Our proof of Theorem \ref{main1} is of arithmetic nature and mimics the approach taken for the unicritical case in \cite{article1}. Let $k\geq 0, n>0$. We form polynomials $h_{k,n}\in \Z[a,b]$ such that $\mathscr{S}_{k,n}$ is Zariski dense in the curve of $h_{k,n}$ in $\mathcal{M}_{3}$, with finite complement. We form the polynomials $h_{0,2}$ and $h_{1,2}$ explicitly and show that they are irreducible over $\C$. For $k\geq 2$, We show that $h_{k,2}$ polynomials are generalised $3$-Eisenstein polynomials with respect to $h_{1,2}$. Hence, they are irreducible over $\Q$. If the polynomials $h_{k,2},\; k\geq 0$ are irreducible over $\C$, then we are done. Analogs of these polynomials in the $(k,1),\; k\geq 0$ case (studied in \cite{article1}) and $(0,n),\; n\in \N$ case (studied in \cite{AK20}) has turned out to be irreducible. But in the eventually $2$-periodic case, we observe a possible \emph{splitting-merging phenomenon}. The polynomial $h_{1,2}$ is \emph{reducible}. In general, the polynomials $h_{k,2}$ can be reducible over $\C$. If $h_{k,2}$ is reducible for some $k\geq 0$, then it can split into at most two factors over the field $\Q[i]$. We show that both of these factors, that lie in $\Q[i][a,b]$, have a smooth $\Q[i]$-rational point. Using extension of irreducibility (Corollary \ref{p}), we get that both of these factors are irreducible over $\C$. Moreover, we show that the irreducible curves in $\C^{2}$ corresponding to these two factors merge together under the equivalence relation $(a,b) \sim (-a,-b)$, generating one irreducible curve in $\mathcal{M}_{3}$, which contains $\mathscr{S}_{k,2}$ as a Zariski dense subset. This completes the proof of Theorem \ref{main1}.

    As a consequence of this approach, we find an explicit form of $h_{k,2}$ polynomials. 
    \begin{proposition}
        The polynomial $h_{0,2}$ is $(b-a)(b+2a)+1$. Let $k\in \N$. For $k$ odd,
    \begin{equation}
        h_{k,2} = \dfrac{f_{k,2}\cdot f_{k-1,1}}{h_{0,2}\cdot f_{k-1,2}\cdot f_{k,1}},
    \end{equation}
    and for $k$ even,
    \begin{equation}
        h_{k,2} = \dfrac{f_{k,2}\cdot f_{k-1,1}}{f_{k-1,2}\cdot f_{k,1}}.
    \end{equation}
    \end{proposition}

    A polynomial over $\C$ is called \emph{unicritical} iff all the finite critical points are equal. Assuming $a=0$ in Equation \eqref{maineq}, we get the normal form of a monic, reduced, unicritical cubic polynomial,
    \begin{equation}\label{unipol}
        f_{b}(z)= z^{3} +b.
    \end{equation}
    For some of the recent studies on unicritical points in moduli spaces of polynomials, see \cite{Milnor+2014+15+24}, \cite{HT15}, \cite{article1}, \cite{G20}, \cite{Gok20}, \cite{BGok22}, \cite{BG23}. In \cite{Milnor+2014+15+24}, Milnor conjectured that,
    \begin{conjecture}\label{conj2}
        Let $k\geq 0, n>0$. The finite set of values of $b$ for which the critical point $0$ is strictly $(k,n)$-preperiodic under $f_{b}$, form one Galois orbit under the action of the absolute Galois group of $\Q$.
    \end{conjecture}
    One can form a polynomial $R_{k,n} \in \Z[b]$, whose solution set is the set of all values of $b$ for which $0$ is strictly $(k,n)$-preperiodic. Hence, Milnor's conjecture can be restated as, for any $k\geq 0, n\geq 1$, the polynomial $R_{k,n}$ is either constant or irreducible over $\Q$. These polynomials can be constant polynomials, for example, $R_{1,n}, n\in \N$ are all equal to one (see Remark \ref{coales}). Vefa Goksel (\cite{G20}, \cite{Gok20}) has shown that $R_{k,1},\: R_{k,2},\; k\geq 0$ polynomials are irreducible over $\Q$. Replacing the normal form in Equation \eqref{unipol} with the form $f_{c}(z)= cz^{3}+1,\; c\in \C$, Buff, Epstein and Koch (\cite{article1}) proved a parallel version of Conjecture \ref{conj2} for $(k,1),\; (k,2),\; k\geq 0$ cases. Both of their approaches rely on studying arithmetic properties of $R_{k,n}$ polynomials and concluding that $R_{k,1}$ and $R_{k,2}$ polynomials are Eisenstein polynomials. Here we show that the Eisenstein nature of $R_{k,2}$ polynomials arise from the Eisenstein nature of $h_{k,2}$ polynomials, mentioned above. Thus we obtain a new and short proof of Conjecture \ref{conj2} for $k$ even and $n=2$ (Theorem \ref{uni}). We state the theorem below.
    \begin{theorem}
        For any even $k\in \Z,\; k\geq 0$, the polynomial $R_{k,2}$ is either constant or irreducible over $\Q$.
    \end{theorem}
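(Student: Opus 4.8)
The plan is to deduce the unicritical statement from the already-established fact (Theorem \ref{main1} and its proof) that $h_{k,2} \in \Z[a,b]$ is a generalised $3$-Eisenstein polynomial. First I would make precise the relationship between $h_{k,2}^{uni}(b)$ and $h_{k,2}(a,b)$: since the unicritical cubic $f_b(z) = z^3+b$ is obtained from $f_{a,b}$ by setting $a=0$, and the marked critical point degenerates to $0$, I expect that $h_{k,2}^{uni}(b)$ is (up to a unit, or possibly a power of some simple factor coming from the vanishing of $a$) the specialisation $h_{k,2}(0,b)$, or perhaps a suitable divisor of it corresponding to the \emph{strictly} preperiodic locus. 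Pinning down this identification cleanly — including which factor of $h_{k,2}(0,b)$ actually records strict $(k,2)$-preperiodicity of $0$, and why the even-$k$ hypothesis is needed here (it is presumably what guarantees $0$ is genuinely marked/critical and not absorbed into a lower-level coalescence, cf. the remark that $h_{1,n}^{uni}=1$) — is the first real task.

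Next, assuming $h_{k,2}^{uni}(b)$ is nonconstant, I would show it inherits the generalised $3$-Eisenstein property by specialising $a \mapsto 0$ in the generalised $3$-Eisenstein structure of $h_{k,2}(a,b)$. Recall that generalised $3$-Eisenstein means: with respect to the prime $3$ (or a prime of $\Z[i]$ above $3$, given the role of $\Q[i]$ in the paper), the leading coefficient is a $3$-unit, all lower coefficients lie in the prime ideal, and the constant term is not in the square of the prime ideal. I would track each of these three conditions through the substitution $a=0$: the leading term in $b$ of $h_{k,2}(a,b)$ should specialise to the leading term of $h_{k,2}^{uni}(b)$ with unit coefficient; divisibility by $3$ of intermediate coefficients is preserved under any ring homomorphism $\Z[a,b]\to\Z[b]$; and the delicate point is the constant term — I need the $b$-constant term of $h_{k,2}(0,b)$ (equivalently, the value of the relevant factor at $b=0$) to still have $3$-adic valuation exactly $1$. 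This may fail if setting $a=0$ creates extra cancellation, so I would compute $h_{k,2}^{uni}(0)$ directly: $b=0$ corresponds to $f_0(z)=z^3$, for which $0$ is a fixed point, hence certainly not strictly $(k,2)$-preperiodic for $k\ge 0$, so $h_{k,2}^{uni}(0)\ne 0$, and I would argue its $3$-valuation is exactly $1$ by relating it to an explicit small-case computation or to $h_{k,2}(0,0)$.

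The main obstacle I anticipate is exactly this control of the constant term and of possible spurious factors introduced by the specialisation $a\mapsto 0$: the full two-variable polynomial $h_{k,2}$ may be irreducible while $h_{k,2}(0,b)$ factors, and I must show that the \emph{strictly}-preperiodic part is a single generalised $3$-Eisenstein factor rather than a product, and that no factor with trivial $3$-valuation constant term has been split off. A clean way around this is to avoid factoring $h_{k,2}(0,b)$ at all: instead, give a direct definition of $h_{k,2}^{uni}(b)$ as a resultant/gcd expression in terms of the dynatomic-type polynomials for $f_b(z)=z^3+b$ at level $(k,2)$, then prove \emph{that} polynomial is generalised $3$-Eisenstein by the same mechanism used for $h_{k,2}$ (the paper's construction of $h_{k,n}$ should adapt verbatim with $a=0$), verifying the constant-term valuation via the $b=0$ computation above and the leading-coefficient normalisation via the degree count. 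Once the generalised $3$-Eisenstein property is in hand, irreducibility over $\Q$ follows immediately from the generalised Eisenstein criterion, exactly as in the proof of Theorem \ref{main1}. Finally I would handle the ``constant'' alternative: for the finitely many even $k$ (indeed possibly only small ones) where the strictly $(k,2)$-preperiodic locus in the unicritical family is empty, $h_{k,2}^{uni}$ is a nonzero constant, which is covered by the statement; I would note that $k=0$ gives a genuinely nonconstant example to confirm the dichotomy is not vacuous.
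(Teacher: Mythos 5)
Your high-level strategy --- specialise $a\mapsto 0$, show the Eisenstein structure of $h_{k,2}$ survives the specialisation, and conclude for $h_{k,2}^{uni}$ by divisibility --- is exactly the paper's route, and your closing reduction (it suffices that $h_{k,2}(0,b)$ be irreducible over $\Q$, since $h_{k,2}^{uni}$ divides it) is also how the paper sidesteps any discussion of which factor records strict preperiodicity. But there is a genuine gap in the middle: the ``generalised $3$-Eisenstein'' property you propose to track is not the one in play, and the one you describe is simply false for these polynomials. The criterion (Theorem \ref{m}) is Eisenstein \emph{relative to the auxiliary polynomial} $h=h_{1,2}=(b-a)^{2}+1$: the three conditions are $g\equiv h^{N}\ (\text{mod }3)$, irreducibility of $h$ modulo $3$, and $\mbox{Res}(g,h)\not\equiv 0\ (\text{mod }3^{2\deg h})$. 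There is no condition on the constant term of $g$, and the classical Eisenstein criterion at $3$ cannot apply here: $h_{k,2}(0,b)\equiv (b^{2}+1)^{N}\ (\text{mod }3)$ has constant term $\equiv 1$, so its lower-order coefficients are certainly not all divisible by $3$. Your plan to verify ``intermediate coefficients in $(3)$, constant term of $3$-valuation exactly $1$'' would fail at the first check, and the auxiliary computation of $h_{k,2}^{uni}(0)$ is not the relevant quantity.

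The condition that actually has to survive the substitution $a=0$ is the resultant bound, and this is precisely where the hypothesis that $k$ is even enters --- not, as you guessed, through a marking or coalescence issue. By Lemma \ref{k23res} and Corollary \ref{k23res2}, $\mbox{Res}(h_{k,2},h_{1,2})$ equals $9(4a^{2}+1)$ for $k$ even but $9a^{2}$ for $k$ odd$_{>1}$; since $h_{k,2}$ is monic in $b$ over $\Z[a]$, the specialisation $a\mapsto 0$ preserves degrees in $b$ and hence resultants, giving $\mbox{Res}(h_{k,2}(0,b),b^{2}+1)=9\not\equiv 0\ (\text{mod }81)$ for even $k$, while for odd $k$ the specialised resultant is $0$ and the method collapses (indeed $h_{k,2}(0,b)$ then shares a root with $b^{2}+1$). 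The congruence $h_{k,2}(0,b)\equiv (b^{2}+1)^{N}\ (\text{mod }3)$ follows by simply setting $a=0$ in Lemma \ref{k23E1}, so no delicate cancellation analysis is needed there. With these corrections your outline becomes the paper's proof; without them the key step has no working verification.
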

    In the case of odd $k\in \N$, this approach gets obstructed due to an extra $(b^{2}+1)$ factor appearing in the factorization of $h_{k,2}(0,b)$. Appearance of this factor obscures the exact valuation of the resultant of $R_{k,2}$ and $b^{2}+1$, with respect to the prime $3$.
    
    We provide a sectionwise summary here. In Section \ref{prelim}, we form the polynomials $h_{k,2},\; k\geq 0,$ and show that the curve in $\mathcal{M}_{3}$ corresponding to $h_{k,2}$ contains $\mathscr{S}_{k,2}$ as a Zariski dense subset. In Section \ref{notations}, we fix some notations to be used in the later sections. In Section \ref{lemmas}, we state some lemmas and tools to be used in the proofs of the later sections. In Section \ref{t''}, we prove irreducibility of $\mathscr{S}_{k,2}$ curves. In Section \ref{explicit}, we obtain the explicit form of the polynomials $h_{k,2}$. In Section \ref{unicritical}, we prove irreducibility results in the unicritical cubic case. Finally, in Section \ref{kqcurves}, we show that our method does not extend directly for $(k,q)$ curves, where $q$ is an odd prime number. 
    
\section{Preliminaries}\label{prelim}
    Recall that, for any $k\geq 0, n>0,$ $\mathscr{S}_{k,n}$ denote the set of all points $(a,b)\in \mathcal{M}_{3}$ such that $a$ is strictly $(k,n)$-preperiodic under $f(z)= z^{3}-3a^{2}z+2a^{3}+b$. In this section, we will form polynomials $h_{k,n}$ such that $\mathscr{S}_{k,n}$ is Zariski dense in the curve of $h_{k,n}$ in $\mathcal{M}_{3}$. As $a$ and $b$ serve as parameters for the set of all monic reduced cubic polynomials over $\C$, we will drop the subscripts $a,b$ from $f_{a,b}$ as in Equation \eqref{maineq}.

    Let $k\geq 0, n>0$. Any point $(a,b)\in \mathcal{M}_{3}$ for which $a$ is $(k,n)$-preperiodic must satisfy the equation, 
        $$f_{k,n}:= f^{k+n}(a)-f^{k}(a)=0,$$
    where $f$ is the polynomial,
    \begin{equation}\label{newmaineq}
        f(z)= z^{3} -3a^{2}z +2a^{3} +b.
    \end{equation}
    Observe that $a$ is not necessarily strictly $(k,n)$-preperiodic, for every point $(a,b)$ that satisfies $f_{k,n}$. In section \ref{lemmas}, we will show that for $0\leq l\leq k,\; 1\leq m$ and $ m|n,$ the polynomial $f_{l,m}$ divides $f_{k,n}$ in $\Z[a,b]$. So, we form the polynomial,
    \begin{equation}\label{hkn}
        h_{k,n} = \frac{f_{k,n}}{\prod_{i} g_{i}^{\alpha_{i}}},
    \end{equation}
    where $g_{i}$ varies over all irreducible factors of $f_{l,m}$ in $\Z[a,b]$, for all $0\leq l\leq k, 1\leq m,\; m|n,\; (k,n)\neq (l,m)\in \Z^{2}$, and $\alpha_{i}$ is the highest power of $g_{i}$ that divides $f_{k,n}$.
    \begin{lemma}\label{dense}
        Let $k\geq 0, n>0$. The polynomial $h_{k,n}$ kills $\mathscr{S}_{k,n}$. Moreover, $\mathscr{S}_{k,n}$ is Zariski dense in the curve of $h_{k,n}$ in $\mathcal{M}_{3}$, with finitely many points in its complement.
    \end{lemma}
    \begin{proof}
        Any point in $\mathscr{S}_{k,n}$ lies on the curve of $f_{k,n}$ but not on the curve of $g_{i}$, for any $g_{i}$ appearing in Equation \eqref{hkn}. Hence, $\mathscr{S}_{k,n}$ is a subset of the curve of $h_{k,n}$ in $\mathcal{M}_{3}$. The complement of $\mathscr{S}_{k,n}$ in the curve of $h_{k,n}$ consists of points $(a,b)$ for which $a$ is $(k,n)$-preperiodic but not strictly. So, each of them is a solution of some $g_{i}$ appearing in Equation \eqref{hkn}. By definition of $h_{k,n}$, $h_{k,n}$ is coprime to $g_{i}$ over $\Z$, for every $i$. The polynomials $h_{k,n}$ and $g_{i}$'s are all monic as polynomials in $b$ over $\Z[a]$. So for every $i$, the polynomials $g_{i}$ and $h_{k,n}$ are coprime over $\Q$. Hence, $h_{k,n}$ is coprime to $g_{i}$ over $\C$ too, for every $i$. As there are only finitely many $g_{i}$, the complement of $\mathscr{S}_{k,n}$ in the curve of $h_{k,n}$ is finite. Hence, $\mathscr{S}_{k,n}$ is Zariski dense subset of the curve of $h_{k,n}$ in $\mathcal{M}_{3}$, with finitely many points in its complement.
    \end{proof}
    From Lemma \ref{dense}, one directly obtains the following corollary,
    \begin{corollary}\label{irrcoro}
        For any $k\geq 0, n>0$, the set $\mathscr{S}_{k,n}$ is an algebraic curve. Also, if the polynomial $h_{k,n}$ is irreducible over $\C$, then the curve $\mathscr{S}_{k,n}$ is irreducible. \qed
    \end{corollary}
    \begin{corollary}\label{redcoro}
        Let $k \geq 0, n>0$. Let us assume that the polynomial $h_{k,n}$ factorizes in $\C[a,b]$ as $h_{k,n}= t\cdot t^{\diamond}$, where $t\in \C[a,b]$ is an irreducible polynomial and $t^{\diamond}(a,b):= t(-a,-b)$. Then, $\mathscr{S}_{k,n}$ is an irreducible curve in $\mathcal{M}_{3}$. 
    \end{corollary}
    \begin{proof}
        By definition of $t^{\diamond}$, the curves of $t, t^{\diamond}$ and $h_{k,n}$ in $\mathcal{M}_{3}$, coincide. Replacing $h_{k,n}$ with $t$ (or, $t^{\diamond}$) in Lemma \ref{dense} and Corollary \ref{irrcoro}, we obtain the corollary.
    \end{proof}
    \begin{remark}
        As we will see in section \ref{t''}, converse of the second part of Corollary \ref{irrcoro} is not true. For example, we will see that $h_{k,2}$ polynomials can be reducible over $\C$. It turns out that for any $k\geq 0$, $h_{k,2}$ can have at most two irreducible factors. We will further show that if $h_{k,2}$ is reducible for some $k\geq 0$, it must factorize as in the assumption of Corollary \ref{redcoro}. Thus, using Corollaries \ref{irrcoro} and \ref{redcoro}, we will show that $\mathscr{S}_{k,2}$ is an irreducible curve in $\mathcal{M}_{3}$, for any $k\geq 0$.  
    \end{remark}
\section{Notations}\label{notations}
    We will use the following notations for the rest of the article. Let $g,h$ be elements of $\Z[a,b],$ the polynomial ring in variables $a,b$ over $\Z$. 
    \begin{itemize}
        \item By saying $g$ is \emph{monic} in $\Z[a][b]$, we mean $g$ is monic as a polynomial in $b$ over the ring $\Z[a]$.
        \item By $\mbox{Res}(g,h)$, we denote the \emph{resultant} of $g$ and $h$, both considered as polynomials in $b$ with coefficients coming from $\Z[a]$. So, $\mbox{Res}(g,h)\in \Z[a]$.
    \end{itemize}
    Consider the polynomial $f$ as defined in Equation \eqref{newmaineq}. For any non-negative integers $k,n$, with $n>0$,  
    \begin{itemize}
        \item $f^{0}:=$ identity map, $f^{n}:=f^{n-1} \circ f, $ for all $ n\in \N$.
        \item $f'$ denote the derivative of $f$ with respect to $z$.
        \item $f_{k,n}=f_{k,n}(a,b):= f^{k+n}(a)-f^{k}(a)$.
        \item $h_{k,n}=h_{k,n}(a,b):= f_{k,n}/\prod_{i} g_{i}^{\alpha_{i}}$, where $g_{i}$ varies over all distinct irreducible factors of $f_{l,m}$ over $\Z$, where $l\leq k,\; m|n,\; (l,m)\neq (k,n)\in \Z^{2}$, and for each $i$, $\alpha_{i}$ is the highest power of $g_{i}$ that divides $f_{k,n}$.
        \item $\C^{2}:=$ complex affine space of dimension $2$.
        \item $\mathcal{M}_{3}:=\C^{2}\bigg/\biggl((a,b)\sim (-a,-b) \biggr)$.
        \item $\mathscr{S}_{k,n}:=$ the set of all points of $\mathcal{M}_{3}$ for which $a$ is strictly $(k,n)$-preperiodic.
        \item $G_{\Q}$ denotes the absolute Galois group of $\Q$.
        \item $\F_{3}$ denotes the finite field of order $3$.
    \end{itemize}
    Let $F$ be a number field and $g\in F[a,b]$.
    \begin{itemize}
        \item By saying $g$ has a smooth $F$-rational point, we mean that there exists a point $(a^{0},b^{0})\in F^{2}$, such that $g(a^{0},b^{0})=0$ and $g$ is smooth at $(a^{0},b^{0})$.    
    \end{itemize}
\section{Basic lemmas and Tools}\label{lemmas}
In this section, we gather a collection of lemmas and tools, that will be used in later sections. Generalizations of some statements of this section have been proved in \cite{Pat23}. For such statements, we omit the proof here and refer to the generalized statement in \cite{Pat23}.

Consider the Equation in \cite[Equation~$(2.2)$]{Pat23}. Putting degree $d=3$ and $\alpha_{1}= a, \alpha_{2}= -a, \beta= b$ in \cite[Equation~$(2.2)$]{Pat23}, we get the modified \emph{Branner Hubbard normal form} for monic reduced cubic polynomials,
    \begin{equation}
        f(z) = z^{3} -3a^{2}z + 2a^{3} +b.
    \end{equation}    
\subsection{Divisibility properties of $f_{k,n}$} 
    Let $k,l\in \Z$ such that $0\leq l\leq k$. 
    \begin{lemma}\label{l}
        Let $m,n \in \N$ such that $m$ divides $n$. The polynomial $f_{l,m}$ divides $f_{k,n}$ in $\Z[a,b]$.
    \end{lemma}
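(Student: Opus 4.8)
The plan is to reduce the statement to two independent divisibilities in the integral domain $\Z[a,b]$ and then conclude by transitivity: namely $f_{l,n}\mid f_{k,n}$, which uses only $l\le k$, and $f_{l,m}\mid f_{l,n}$, which uses only $m\mid n$. The single tool needed throughout is the elementary fact that for any polynomial $R(z)$ over a commutative ring, $X-Y$ divides $R(X)-R(Y)$. Applying this in $\Z[a,b][z]$ with $R(z):=f^{j}(z)$ --- a polynomial in $z$ whose coefficients lie in $\Z[a,b]$, since $f$ has coefficients there --- and then substituting $z$-values $A,B\in\Z[a,b]$, I obtain: for every $j\ge 0$ and all $A,B\in\Z[a,b]$,
\begin{equation}\label{eq:keydiv-fln}
    (A-B)\mid\bigl(f^{j}(A)-f^{j}(B)\bigr)\q\text{in }\Z[a,b].
\end{equation}

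For the first divisibility, I would write $f_{k,n}=f^{k+n}(a)-f^{k}(a)=f^{k-l}\!\bigl(f^{l+n}(a)\bigr)-f^{k-l}\!\bigl(f^{l}(a)\bigr)$, which is valid since $l\le k$, and then invoke \eqref{eq:keydiv-fln} with $A=f^{l+n}(a)$, $B=f^{l}(a)$, $j=k-l$ to conclude that $f_{l,n}=f^{l+n}(a)-f^{l}(a)$ divides $f_{k,n}$ (the case $l=k$ being trivial, as $f^{0}=\mathrm{id}$).

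For the second divisibility, I would set $t:=n/m\in\N$, $x:=f^{l}(a)\in\Z[a,b]$, and $g(z):=f^{m}(z)$, so that $f_{l,m}=g(x)-x$ and $f_{l,n}=g^{t}(x)-x$ (using $f^{mt}=g^{t}$). Then I would prove $(g(x)-x)\mid(g^{t}(x)-x)$ by induction on $t$: the case $t=1$ is trivial, and for the inductive step one writes
\begin{equation}
    g^{t+1}(x)-x=\bigl(g(g^{t}(x))-g(x)\bigr)+\bigl(g(x)-x\bigr),
\end{equation}
where the first summand is divisible by $g^{t}(x)-x$ by \eqref{eq:keydiv-fln} applied to $g$ (equivalently, with $j=m$, $A=g^{t}(x)$, $B=x$), hence by $g(x)-x$ through the inductive hypothesis, while the second summand is plainly divisible by $g(x)-x$.

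Chaining the two results yields $f_{l,m}\mid f_{l,n}\mid f_{k,n}$, which is the assertion. I do not expect a genuine obstacle here; the only point demanding care is the bookkeeping of iterates and the verification that every quantity remains inside the polynomial ring $\Z[a,b]$, so that \eqref{eq:keydiv-fln} applies literally at each step. This is precisely the degree-$3$, marked-critical-point specialization of the more general divisibility established in \cite{Pat23}, to which one could alternatively appeal.
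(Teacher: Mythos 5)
Your proof is correct: the specialization-of-divisibility fact $(A-B)\mid\bigl(f^{j}(A)-f^{j}(B)\bigr)$ is sound (divisibility of $X-Y$ in $\Z[a,b][X,Y]$ specializes under the evaluation homomorphism), the decomposition $f^{k+n}(a)-f^{k}(a)=f^{k-l}\bigl(f^{l+n}(a)\bigr)-f^{k-l}\bigl(f^{l}(a)\bigr)$ is the right bookkeeping for $l\le k$, and the induction on $t=n/m$ with $g=f^{m}$, $x=f^{l}(a)$ correctly yields $(g(x)-x)\mid(g^{t}(x)-x)$, so the chain $f_{l,m}\mid f_{l,n}\mid f_{k,n}$ goes through in $\Z[a,b]$. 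The route differs from the paper only in presentation: the paper does not argue this lemma directly at all, but obtains it by substituting $d=3$, $\{a,b\}$ and $\Z$ into Lemma $4.1$ of \cite{Pat23}, which treats the analogous divisibility for prime-power degree families over $\Z_{(p)}$; your argument is the self-contained, elementary version of that general statement (and is essentially the standard proof one would give there). What the citation buys is brevity and placement of the lemma inside the general framework used throughout Section 4; what your version buys is a complete proof within $\Z[a,b]$ requiring no external reference, with the two hypotheses $l\le k$ and $m\mid n$ cleanly separated into the two divisibility steps --- and you correctly note the alternative appeal to \cite{Pat23}, so nothing is missing.
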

    \begin{proof}
        In \cite[Lemma~4.1]{Pat23}, replacing $\hat{f}_{k,n,d}, \hat{f}_{l,m,d}, \Z_{(p)}[\alpha_{1},\alpha_{2},...,\alpha_{d-2}, \beta]$ with $f_{k,n}, f_{l,m}, \Z[a,b]$ respectively, one obtains this lemma.
    \end{proof}
    \begin{lemma}\label{k}
        Let $m,n\in \N$, and g.c.d.$(m,n)=r$. Let $g$ be an irreducible element of $\Z[a,b]$, monic as a polynomial in  $\Z[a][b]$. If $g$ divides both $f_{k,n}$ and $f_{l,m}$ in $\Z[a,b]$, then $g$ divides $f_{l,r}$ in $\Z[a,b]$.
    \end{lemma}
    \begin{proof}
        Similarly as lemma \ref{l}, one obtains this lemma from \cite[Lemma~$4.2$]{Pat23}.
    \end{proof}
    From Lemmas \ref{l} and \ref{k}, one directly obtains the following corollary,
    \begin{corollary}
        Let $m,n\in \N$, and g.c.d.$(m,n)=r$. The polynomial $f_{l,r}$ divides $g.c.d.(f_{k,n},f_{l,m})$ in $\Z[a,b]$. Moreover, The radical ideals of the ideal generated by $f_{l,r}$ and the ideal generated by $g.c.d.(f_{k,n},f_{l,m})$ are the same. \qed
    \end{corollary}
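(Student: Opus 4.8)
The plan is to deduce both assertions formally from Lemmas \ref{l} and \ref{k} together with standard facts about the UFD $\Z[a,b]$. For the divisibility statement, I would first note that $r=\gcd(m,n)$ divides $n$ and that $l\le k$, so Lemma \ref{l} (with the roles $k\mapsto k$, $l\mapsto l$, $n\mapsto n$, $m\mapsto r$) gives $f_{l,r}\mid f_{k,n}$; applying Lemma \ref{l} a second time with $k\mapsto l$, $l\mapsto l$, $n\mapsto m$, $m\mapsto r$ (legitimate since $r\mid m$) gives $f_{l,r}\mid f_{l,m}$. Hence $f_{l,r}$ divides a greatest common divisor of $f_{k,n}$ and $f_{l,m}$ in $\Z[a,b]$, which is the first claim.

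For the radical-ideal statement, one inclusion is immediate from the divisibility just proved: since $f_{l,r}\mid \gcd(f_{k,n},f_{l,m})$ we have $(\gcd(f_{k,n},f_{l,m}))\subseteq (f_{l,r})$, hence $\sqrt{(\gcd(f_{k,n},f_{l,m}))}\subseteq\sqrt{(f_{l,r})}$. For the reverse inclusion I would show that every irreducible factor of $\gcd(f_{k,n},f_{l,m})$ already divides $f_{l,r}$. The relevant observation is that $f_{k,n}=f^{k+n}(a)-f^{k}(a)$ is monic (hence primitive) as a polynomial in $b$ over $\Z[a]$: one has $f(a)=b$, and an easy induction shows $f^{j}(a)$ has leading term $b^{3^{j-1}}$ in $b$ for $j\ge 1$, so $f_{k,n}$ has leading term $b^{3^{k+n-1}}$. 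By Gauss's lemma over the UFD $\Z[a]$, every irreducible factor of $\gcd(f_{k,n},f_{l,m})$ may be normalised to be monic in $\Z[a][b]$. Such a factor $g$ divides both $f_{k,n}$ and $f_{l,m}$ with $l\le k$ and $\gcd(m,n)=r$, so Lemma \ref{k} applies verbatim and gives $g\mid f_{l,r}$. Since the distinct irreducible factors of $\gcd(f_{k,n},f_{l,m})$ are pairwise non-associate in $\Z[a,b]$, their product divides $f_{l,r}$, and therefore $\gcd(f_{k,n},f_{l,m})\mid f_{l,r}^{N}$ for suitable $N$; equivalently $f_{l,r}\in\sqrt{(\gcd(f_{k,n},f_{l,m}))}$, which yields $\sqrt{(f_{l,r})}\subseteq\sqrt{(\gcd(f_{k,n},f_{l,m}))}$ and hence equality.

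I do not expect a real obstacle here: the corollary is essentially a repackaging of Lemmas \ref{l} and \ref{k}. The one point needing care is the hypothesis of Lemma \ref{k}, which is stated for a \emph{monic} irreducible element of $\Z[a][b]$, so the argument must first record that $f_{k,n}$ — and therefore any of its divisors — is monic and primitive in $b$ over $\Z[a]$ before invoking it. I would also briefly check the degenerate cases $l=0$ or $k=l$, where some of these polynomials are interpreted using $f^{0}=\mathrm{id}$; these cause no difficulty since the monicity and divisibility facts above still hold.
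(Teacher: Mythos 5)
Your proof is correct and follows exactly the route the paper intends: the corollary is stated there without proof as an immediate consequence of Lemmas \ref{l} and \ref{k}, and your argument simply fills in the details. Your one point of added care --- observing that $f_{k,n}$ is monic, hence primitive, in $\Z[a][b]$ so that every irreducible factor of $\gcd(f_{k,n},f_{l,m})$ can be normalised to satisfy the monicity hypothesis of Lemma \ref{k} --- is precisely the detail the paper leaves implicit, and it is handled correctly.
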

\subsection{A weak version of Thurston's rigidity theorem for $\mathcal{M}_{3}$}
    \begin{theorem}\label{thurs}
        Fix $k_{1},k_{2}\in \N \cup \{0\}$, and $n_{1},n_{2}\in \N$. Then, the polynomials 
        \begin{equation*}
            f^{k_{1}+n_{1}}(a) - f^{k_{1}}(a) \text{ and } f^{k_{2}+n_{2}}(-a) - f^{k_{2}}(-a)
        \end{equation*}
        are coprime in $\C[a,b]$.
    \end{theorem}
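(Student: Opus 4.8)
The plan is to argue by contradiction using Thurston's rigidity theorem, which in its classical form states that a postcritically finite rational map which is not a flexible Lattès example is rigid: it has no nontrivial deformations preserving its combinatorics. Suppose the two polynomials $f^{k_1+n_1}(a)-f^{k_1}(a)$ and $f^{k_2+n_2}(-a)-f^{k_2}(-a)$ have a common factor in $\C[a,b]$; then their common zero locus contains an irreducible curve $C \subset \C^2$. At a generic point $(a,b)$ of $C$ the cubic $f=f_{a,b}$ has \emph{both} of its critical points, $a$ and $-a$, preperiodic (the first is $(k_1,n_1')$-preperiodic for some $n_1'\mid n_1$, the second $(k_2,n_2')$-preperiodic for some $n_2'\mid n_2$), hence $f_{a,b}$ is postcritically finite. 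Since this holds along a whole curve, we obtain a one-parameter family of PCF cubic polynomials with fixed portrait (after passing to a dense open subset of $C$ where the combinatorial type is constant, which exists because the portrait takes only finitely many values). A cubic polynomial is never a flexible Lattès map (those exist only in degree coming from elliptic-curve quotients and are not polynomials), so Thurston rigidity forbids such a nonconstant family — contradiction. Therefore the two polynomials are coprime.

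First I would make precise the reduction to a constant portrait: the map sending $(a,b)\in C$ to the pair of critical portraits of $f_{a,b}$ has finite image (bounded by $k_i,n_i$), so some fibre is Zariski dense in $C$; restrict to it. Next I would invoke the standard consequence of Thurston rigidity in the polynomial setting — e.g. the form used by Buff--Epstein--Koch and going back to work of Douady--Hubbard and McMullen — that the set of PCF maps with a given portrait is finite, equivalently that a PCF cubic polynomial cannot be deformed within the family $f_{a,b}$ while keeping all postcritical relations. Concretely: along $C$ the two postcritical-orbit relations are algebraic identities, so differentiating them and using that the Thurston pullback map on the relevant Teichmüller space is a strict contraction (no eigenvalue $1$), one forces the tangent vector to $C$ to vanish, contradicting $\dim C = 1$. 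I would cite the precise statement from \cite{article1} (their weak rigidity statement for $\mathcal{M}_3$) rather than reprove it.

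The main obstacle is handling the ``strict vs.\ non-strict'' preperiodicity and the divisor issue cleanly: a point of $C$ only guarantees $f^{k_i+n_i}(\pm a)=f^{k_i}(\pm a)$, not that the eventual period is exactly $n_i$ or that the pre-period is exactly $k_i$, and the critical orbits could also collide with each other or a critical point could be periodic. All of these still yield a PCF map, so the rigidity argument goes through, but one must phrase the portrait bookkeeping so that ``finitely many portraits, each with finitely many realizations'' genuinely covers every case; I would address this by working with the (finite) set of all possible pairs of abstract critical portraits compatible with the two defining equations and noting the union of their realization loci is finite, hence contains no curve. A secondary technical point is that $\C[a,b]$ is a UFD and the relevant polynomials are monic in $b$ over $\Z[a]$ (shown earlier), so ``coprime in $\C[a,b]$'' is equivalent to ``no common zero curve'', which is exactly what the rigidity input refutes.
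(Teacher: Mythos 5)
Your proposal is correct and rests on the same underlying input as the paper: a Thurston-rigidity statement guaranteeing that the locus where both critical points $\pm a$ satisfy prescribed (pre)periodicity relations contains no curve. The paper's proof is simply a citation of Theorem $4.4$ of \cite{Pat23} (already phrased as a coprimality statement), whereas you sketch the standard derivation of coprimality from rigidity --- common factor $\Rightarrow$ curve of postcritically finite cubics with boundedly many portraits $\Rightarrow$ contradiction with finiteness --- which is essentially the content of that cited theorem.
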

    \begin{proof}
        In the version of Thurston's rigidity theorem stated in \cite[Theorem~$4.4$]{Pat23}, replacing $$\hat{f}^{k_{1}+n_{1}}(\alpha_{1}) - \hat{f}^{k_{1}}(\alpha_{1})\;\text{ and }\; \hat{f}^{k_{i}+n_{i}}(\alpha_{i}) - \hat{f}^{k_{i}}(\alpha_{i})$$  with $$f^{k_{1}+n_{1}}(a) - f^{k_{1}}(a)\;\text{ and }\; f^{k_{2}+n_{2}}(-a) - f^{k_{2}}(-a)$$ respectively, one obtains this theorem.
    \end{proof}
\subsection{Generalised Eisenstein Irreducibility criterion}
    \begin{theorem}\label{m}
        Let $g,h$ be non-constant elements of $\Z[a,b]$, both monic as elements of $\Z[a][b]$. Let $\mbox{Res}(g,h)$ denote the resultant of $g$ and $h$, both considered as polynomials in $b$ over the integral domain $\Z[a]$. Suppose the following conditions hold,\\
        1) $g\equiv h^{n}\;(\text{mod } 3)$, for some $n\in \N$.\\
        2) $h\;(\text{mod } 3)$ is irreducible in $\F_{3}[a,b]$.\\
        3) $\mbox{Res}(g,h)\not\equiv 0\;(\text{mod }3^{2\cdot deg(h)})$, where $deg(h)$ is the degree of $h$ as a polynomial in $b$ over $\Z[a]$.
    
        Then, $g$ is irreducible in $\Q[a,b]$.
    \end{theorem}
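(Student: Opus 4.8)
The plan is to mimic the classical Eisenstein argument, but working over the polynomial ring $\Z[a][b]$ (a polynomial ring in $b$ over the PID-like ring $\Z[a]$, which is a UFD) instead of over $\Z$, and using the maximal ideal $(3)$ in place of a prime number, together with the factorization structure of the residue ring $\F_3[a,b]$. So first I would suppose for contradiction that $g = g_1 g_2$ is a non-trivial factorization in $\Q[a,b]$; since $g$ is monic in $b$ over $\Z[a]$, by a Gauss-type argument I may assume $g_1, g_2 \in \Z[a,b]$ and are both monic in $b$ over $\Z[a]$ (clearing denominators and rescaling by units, noting $\Z[a]$ is a UFD so Gauss's lemma applies). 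Reducing mod $3$ and using hypothesis (1), I get $\bar g_1 \bar g_2 \equiv h^n$ in $\F_3[a,b]$; since $\F_3[a,b]$ is a UFD and $\bar h$ is irreducible by hypothesis (2), unique factorization forces $\bar g_1 \equiv h^{s}$ and $\bar g_2 \equiv h^{t}$ with $s + t = n$, $s,t \geq 0$. The proper-factor case we want to rule out is when both $s \geq 1$ and $t \geq 1$ (the case $s = 0$ would make $\bar g_1$ a unit, i.e. a nonzero constant in $\F_3$, which combined with monicity in $b$ forces $g_1$ to have $b$-degree $0$, and then monic-in-$b$ means $g_1 = 1$, so it is not a proper factor — I should spell this out carefully).

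Next, in the genuine case $s,t \geq 1$, I would extract the divisibility information: write $g_1 = h^s + 3 u_1$ and $g_2 = h^t + 3 u_2$ for some $u_1, u_2 \in \Z[a,b]$ (this is exactly what $\bar g_i \equiv h^{s}$ resp.\ $h^{t}$ mod $3$ says). The key computation is then the resultant $\mathrm{Res}(g,h) = \mathrm{Res}(g_1,h)\,\mathrm{Res}(g_2,h)$ (multiplicativity of the resultant in the first argument, valid because everything is monic in $b$). Now I claim $3^{\deg(h)} \mid \mathrm{Res}(g_i,h)$ for each $i = 1,2$, where $\deg$ means $b$-degree: indeed $\mathrm{Res}(g_i,h) \equiv \mathrm{Res}(h^{s}, h) $ or similar mod $3$... more precisely, since $g_i \equiv h^{s_i} \pmod 3$, reducing the resultant mod $3$ gives $\mathrm{Res}(g_i,h) \equiv \mathrm{Res}(h^{s_i},h) = \pm\,\mathrm{Res}(h,h)^{s_i} \cdot (\text{lead})^{\cdots} \equiv 0 \pmod 3$ because $\mathrm{Res}(h,h) = 0$ (a polynomial shares all roots with itself). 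That only gives one factor of $3$, which is not enough; to get $3^{\deg h}$ I would instead use the formula $\mathrm{Res}(g_i, h) = \prod_{j} g_i(\cdot, \beta_j)$ where $\beta_j$ are the roots of $h$ (over an algebraic closure of $\overline{\Q(a)}$), and since $h(\cdot,\beta_j) = 0$ while $g_i \equiv h^{s_i} \pmod 3$, each factor $g_i(\cdot,\beta_j) \equiv h(\cdot,\beta_j)^{s_i} = 0$ lies in the maximal ideal above $3$, so contributes at least one factor of $3$; there are $\deg(h)$ such roots, giving $3^{\deg h} \mid \mathrm{Res}(g_i,h)$ after checking the factors of $3$ survive the product over a Galois-stable set of roots (this is the standard "$\mathfrak{p}$-adic valuation adds up" part of an Eisenstein proof and is where I expect to have to be most careful about whether one works in $\Z[a]$-adically completed rings or just tracks $3$-divisibility of the symmetric-function expression directly). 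Multiplying the two, $3^{2\deg h} \mid \mathrm{Res}(g,h)$, contradicting hypothesis (3).

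The main obstacle, as signalled above, is making the resultant-divisibility step fully rigorous: the slick "roots of $h$" argument secretly requires working over a ring where $3$ generates a well-behaved ideal (e.g.\ a $3$-adic completion of $\Z[a]$, or localization $\Z_{(3)}[a]$, and then an extension to the splitting field of $h$ over the fraction field), and one has to ensure that a product over a full Galois orbit of roots, each individually "divisible by $3$" in an extension, descends to divisibility by the full power $3^{\deg h}$ back in $\Z[a]$. The clean way I would present it is: pass to $R = \Z_{(3)}[a]$, note $3R$ is prime with residue field $\F_3(a)$ — wait, that is not quite $\F_3[a]$, so one must instead keep $\F_3[a,b]$ as the residue object and argue UFD-theoretically. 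Concretely, I would observe $\mathrm{Res}(g_i, h) \in \Z[a]$ and compute its image in $\Z[a]/3\Z[a] = \F_3[a]$ is $\mathrm{Res}(\bar g_i, \bar h) = \mathrm{Res}(\bar h^{s_i}, \bar h)$, and more to the point, compute its image modulo $3^{\deg h}\Z[a]$ by expanding $g_i = h^{s_i} + 3 u_i$ inside the resultant (Sylvester) determinant and doing a multilinear expansion: each of the $\deg h$ columns coming from $g_i$ can be split as "$h$-part $+\ 3\,u_i$-part", and any term using a "$h$-part" column makes the determinant share a column-dependency with the $h$-block... this needs the precise Sylvester layout, but it is a finite linear-algebra identity and I am confident it yields $3^{\deg h} \mid \mathrm{Res}(g_i,h)$. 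Combining gives the contradiction and hence $g$ is irreducible over $\Q$. (As a sanity check I should confirm this reduces to ordinary Eisenstein when $a$ is absent and $h = b$, $g = b^n + 3(\dots)$: then $\deg h = 1$, $\mathrm{Res}(g,h) = \pm g(0) \equiv 0 \pmod 3$ but the hypothesis $\mathrm{Res}(g,h) \not\equiv 0 \pmod{3^2}$ is exactly "$9 \nmid$ constant term", the classical condition.)
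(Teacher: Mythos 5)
Your proof is correct and is the standard resultant-based generalization of Eisenstein's criterion; the paper does not prove Theorem \ref{m} in place but defers to Theorem 4.6 of \cite{Pat23}, and your argument is the same in substance (normalize the putative factors to be monic in $b$ over $\Z[a]$ via Gauss, force $\bar g_i = \bar h^{s_i}$ with $s_i \geq 1$ by unique factorization in $\F_3[a,b]$, then contradict condition 3 via multiplicativity of the resultant). The one step you flag as delicate is in fact immediate and needs no completion, valuation theory, or Galois-orbit bookkeeping: writing $g_i = h^{s_i} + 3u_i$ and using that $h$ is monic, $\mbox{Res}(g_i,h) = \pm\prod_j g_i(a,\beta_j) = \pm 3^{\deg h}\prod_j u_i(a,\beta_j)$, and the leftover product is $\pm\mbox{Res}(h,u_i)$, a symmetric function of the roots of the monic polynomial $h$ and hence an element of $\Z[a]$; so $3^{\deg h}$ divides $\mbox{Res}(g_i,h)$ in $\Z[a]$ outright.
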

    \begin{proof}
        Replacing $p$ and $\Z[\alpha_{1},...,\alpha_{p^{e}-2}, \beta]$ with $3$ and $\Z[a,b]$ respectively in \cite[Theorem~$4.6$]{Pat23}, this theorem follows.
    \end{proof}
\subsection{Extension of irreducibility}
    In this subsection, we relate the irreducibility of a multivariate polynomial over a number field and over $\C$. The results in this subsection are borrowed from the article \cite{article1} by Buff, Epstein and Koch. We should mention that while we state Theorem \ref{n} and Corollaries \ref{o}, \ref{p} for polynomials in two variables, they can be directly generalised for polynomials in any number of variables.
    \begin{theorem}{\cite[Lemma~$5$]{article1}}\label{n}
        Let $g$ be an element of $\Q[a,b]$. Let $g(0,0)=0$, and the linear part of $g$ is non-zero. Then,
    
            $g$ is irreducible in $\Q[a,b] \iff g$ is irreducible in $\C[a,b]$. \qed
    \end{theorem}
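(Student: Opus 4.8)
The plan is to treat the two implications separately. The implication ``irreducible over $\C$ $\Rightarrow$ irreducible over $\Q$'' is immediate: a nontrivial factorization $g = g_1 g_2$ in $\Q[a,b]$ is in particular one in $\C[a,b]$, and its factors $g_i$ are non-units of $\Q[a,b]$, hence non-constant, hence non-units of $\C[a,b]$. So the content is the converse; assume $g$ is irreducible in $\Q[a,b]$ and suppose, for contradiction, that $g$ is reducible in $\C[a,b]$.

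First I would descend the factorization to $\overline{\Q}$. Reducibility of a polynomial over an algebraically closed field is exactly the failure of geometric (absolute) irreducibility of the plane curve it cuts out, and this is independent of the choice of algebraically closed field containing $\Q$; so $g$ is reducible over $\C$ iff it is reducible over $\overline{\Q}$. Moreover $g$ is squarefree in $\Q[a,b]$ (an irreducible element of a UFD is squarefree). Now fix a monomial order and write $g = c\prod_i g_i^{e_i}$ with the $g_i$ distinct, monic and irreducible in $\overline{\Q}[a,b]$ and $c\in\overline{\Q}^{\times}$; then $G_{\Q}$ permutes the $g_i$, since applying $\sigma\in G_{\Q}$ to coefficients preserves the property of being a monic irreducible factor of $g$. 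For any $G_{\Q}$-orbit $O$ of indices, $q_O := \prod_{i\in O} g_i$ is $G_{\Q}$-invariant, hence $q_O\in\Q[a,b]$, and then the quotient $g/q_O$ is $G_{\Q}$-invariant too, so it also lies in $\Q[a,b]$; thus $q_O$ is a non-constant divisor of $g$ in $\Q[a,b]$, and $\Q$-irreducibility of $g$ forces $q_O$ to be a scalar multiple of $g$. Comparing the two factorizations then forces a single orbit and all $e_i=1$. Thus $g = c\prod_{i=1}^{r} g_i$ with $r\ge 2$ distinct irreducible factors over $\overline{\Q}$, transitively permuted by $G_{\Q}$.

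Next I would bring in the marked smooth point. The hypotheses $g(0,0)=0$ and ``the linear part of $g$ is nonzero'' say precisely that $P:=(0,0)$ is a zero of $g$ at which the gradient $\big(\tfrac{\partial g}{\partial a},\tfrac{\partial g}{\partial b}\big)$ does not vanish. Since $P$ is $\Q$-rational it is fixed by $G_{\Q}$. It is a zero of some $g_{i_0}$; then for each $\sigma\in G_{\Q}$ the point $P=\sigma(P)$ is a zero of $\sigma(g_{i_0})$, which is again one of the $g_i$, so by transitivity $P$ is a zero of every $g_i$. Choose $i\ne j$ (possible since $r\ge 2$); writing $g = c\,g_i\,g_j\cdot(\text{remaining factors})$ and differentiating at $P$, where $g_i(P)=g_j(P)=0$, every term of $\tfrac{\partial g}{\partial a}(P)$ and of $\tfrac{\partial g}{\partial b}(P)$ carries a factor $g_i(P)$ or $g_j(P)$, so the gradient of $g$ vanishes at $P$ --- contradicting the previous sentence. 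Hence $r=1$, i.e. $g$ is irreducible over $\overline{\Q}$ and therefore over $\C$.

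The only genuinely delicate part is the middle paragraph: making the descent to $\overline{\Q}$ and the Galois bookkeeping precise (the normalization under which $G_{\Q}$ literally permutes the $g_i$, the descent of orbit-products from $\overline{\Q}[a,b]$ to $\Q[a,b]$, and the exclusion of repeated factors). The closing Jacobian-criterion contradiction is routine, and the entire argument goes through verbatim with any number of variables.
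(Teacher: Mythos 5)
Your proof is correct and is essentially the paper's argument: both descend the $\C$-factorization to $\overline{\Q}$, let $G_{\Q}$ act on the irreducible factors, and exploit the fact that two distinct factors vanishing at $(0,0)$ would force the linear part of $g$ to vanish. The only difference is where the contradiction lands --- the paper shows the unique factor through the origin is Galois-stable, hence lies in $\Q[a,b]$ and contradicts $\Q$-irreducibility, whereas you show by transitivity that \emph{every} factor passes through the origin and hence that the origin is a singular point; your version also spells out the single-orbit and squarefree bookkeeping that the paper leaves implicit.
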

    \begin{corollary}\label{o}
        Let $g$ be an element of $\Q[a,b]$. Let us assume that $g$ has a smooth $\Q$-rational point, i.e. there exists a point $(a^{0},b^{0})\in \Q^{2}$, such that $g(a^{0},b^{0})=0$, and $g$ is smooth at $(a^{0},b^{0})$. Then,
    
            $g$ is irreducible in $\Q[a,b] \iff g$ is irreducible in $\C[a,b]$.
    \end{corollary}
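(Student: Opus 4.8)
The plan is to reduce Corollary \ref{o} to Theorem \ref{n} by an affine translation carrying the given smooth rational point to the origin. Given the smooth $\Q$-rational point $(a^{0},b^{0})$ with $g(a^{0},b^{0})=0$, I would consider the substitution $a\mapsto a+a^{0}$, $b\mapsto b+b^{0}$. Since $a^{0},b^{0}\in\Q$, this substitution defines a ring automorphism $\tau$ of $\Q[a,b]$, and (by extension of scalars) also a ring automorphism of $\C[a,b]$. In particular $\tau$ carries units to units and irreducible elements to irreducible elements, so $g$ is irreducible over $\Q$ (resp. over $\C$) if and only if $\tilde g:=\tau(g)=g(a+a^{0},b+b^{0})$ is irreducible over $\Q$ (resp. over $\C$).

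Next I would verify that $\tilde g$ satisfies the hypotheses of Theorem \ref{n}. First, $\tilde g(0,0)=g(a^{0},b^{0})=0$. Second, by the chain rule the linear part of $\tilde g$ at the origin has coefficients $\partial_{a}g(a^{0},b^{0})$ and $\partial_{b}g(a^{0},b^{0})$; the assumption that $g$ is smooth at the zero $(a^{0},b^{0})$ means precisely that this gradient is non-zero, i.e. the linear part of $\tilde g$ is non-zero. Hence Theorem \ref{n} applies to $\tilde g$ and yields that $\tilde g$ is irreducible over $\Q$ if and only if $\tilde g$ is irreducible over $\C$.

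Combining the two equivalences finishes the proof: $g$ is irreducible over $\Q$ $\iff$ $\tilde g$ is irreducible over $\Q$ $\iff$ $\tilde g$ is irreducible over $\C$ $\iff$ $g$ is irreducible over $\C$. There is essentially no genuine obstacle here; the only point requiring care is that the translating point be $\Q$-rational, so that $\tau$ respects the $\Q$-structure (translating by an irrational point would conjugate $\Q[a,b]$ onto a different subring of $\C[a,b]$), while the smoothness hypothesis enters only to guarantee the non-vanishing of the linear part demanded by Theorem \ref{n}. The same argument, using the multivariate form of Theorem \ref{n} alluded to in the text, gives the analogous statement for polynomials in any number of variables.
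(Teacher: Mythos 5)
Your proposal is correct and follows exactly the paper's own argument: translate the smooth $\Q$-rational point to the origin by an affine change of coordinates over $\Q$, note that this preserves irreducibility over both $\Q$ and $\C$ and that smoothness gives a non-zero linear part, then apply Theorem \ref{n}. The only difference is that you spell out the routine verifications (that the translation is a ring automorphism and that the gradient condition is exactly the linear-part hypothesis) which the paper leaves implicit.
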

    \begin{proof}
        By an affine change of coordinate, sending $(a^{0},b^{0})$ to $(0,0)$, from $g$ one obtains a polynomial $g'\in \Q[a,b]$, such that constant term of $g'$ is zero and $g'$ has non-zero linear part. Also, $g$ is irreducible over $\C $ (or, over $\Q) \iff g'$ is irreducible over $\C$ (or, over $\Q$). Applying Theorem \ref{n} on $g'$, one obtains the corollary.
    \end{proof}
    \begin{corollary}\label{p}
        Let $F$ be a number field, which means finite extension over $\Q$. Let $g$ be an element of $F[a,b]$. Let us assume that $g$ has a smooth $F$-rational point, defined similarly as in the previous corollary. Then,

            $g$ is irreducible in $F[a,b] \iff g$ is irreducible in $\C[a,b]$.
    \end{corollary}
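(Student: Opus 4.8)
The strategy is to reduce Corollary~\ref{p} to an $F$-analogue of Theorem~\ref{n}, exactly as Corollary~\ref{o} was deduced from Theorem~\ref{n} over $\Q$. So the first step is to record the following refinement: if $g\in F[a,b]$ satisfies $g(0,0)=0$ and has non-zero linear part, then $g$ is irreducible in $F[a,b]$ if and only if it is irreducible in $\C[a,b]$. Its proof should be word for word the proof of Theorem~\ref{n}, with $\Q$ replaced everywhere by $F$ and $G_{\Q}$ replaced by $G_{F}:=\mathrm{Gal}(\overline{\Q}/F)$; this substitution is legitimate because $F$ is a number field, so $\overline{F}=\overline{\Q}$ and $G_F$ makes sense and fixes exactly $F$ inside $\overline{\Q}$. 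In outline: assuming $g=g_1g_2$ over $\C$ with both factors non-constant, one uses that a polynomial with algebraic coefficients which is reducible over $\C$ admits a factorization into non-constant factors with algebraic coefficients (the single non-formal input, already invoked in the proof of Theorem~\ref{n}) to take $g_1,g_2\in\overline{\Q}[a,b]$; after rescaling and, if necessary, passing to a $\C$-irreducible factor of $g_1$, one may assume $g_1$ is irreducible over $\C$, has zero constant term, and has linear part equal to that of $g$, while $g_2(0,0)=1$. For $\sigma\in G_F$ the polynomial $\sigma(g_1)$ is again irreducible over $\C$, and since $\sigma$ fixes $g$, either $\sigma(g_1)=c\,g_1$ for some $c\in\C^{*}$ or $g_1\cdot\sigma(g_1)$ divides $g$; the second alternative would force the linear part of $g$ to vanish, so it is excluded, and comparing linear parts in the first alternative gives $c=1$. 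Hence $\sigma(g_1)=g_1$ for all $\sigma\in G_F$, and since the subfield of $\overline{\Q}$ fixed by $G_F$ is $F$, we get $g_1\in F[a,b]$, contradicting irreducibility of $g$ over $F$.

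Granting this $F$-version of Theorem~\ref{n}, I would finish exactly as in the proof of Corollary~\ref{o}. Choosing a smooth $F$-rational point $(a^{0},b^{0})\in F^{2}$ of $g$, the translation $(a,b)\mapsto(a+a^{0},b+b^{0})$ is an automorphism of both $F[a,b]$ and $\C[a,b]$, so it preserves irreducibility over $F$ and over $\C$; applied to $g$ it produces $g'\in F[a,b]$ with $g'(0,0)=0$ and with linear part $\partial_a g(a^{0},b^{0})\,a+\partial_b g(a^{0},b^{0})\,b$. Smoothness of $g$ at $(a^{0},b^{0})$ is precisely the non-vanishing of this gradient, so $g'$ has non-zero linear part, and applying the $F$-analogue of Theorem~\ref{n} to $g'$ completes the argument.

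I do not expect a genuine obstacle here. The proof of Theorem~\ref{n} uses nothing about $\Q$ beyond its being the fixed field of the Galois group it employs, so the passage $\Q\to F$ is mechanical; the only real content — the descent of reducibility from $\C$ to $\overline{\Q}$ — is shared verbatim with the $\Q$ case; and the remaining thing to verify is merely that the coordinate translation used in Corollary~\ref{o} is defined over $F$ rather than over $\Q$, which is immediate since $(a^{0},b^{0})\in F^{2}$. Thus the "hard part", such as it is, consists only in checking that the earlier argument is robust under this replacement of base field.
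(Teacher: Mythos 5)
Your proposal is correct and is exactly the paper's argument: the paper's proof of this corollary consists precisely of the remark that replacing $\Q$ by $F$ (and $G_{\Q}$ by $G_{F}$) in the proofs of Theorem \ref{n} and Corollary \ref{o} goes through verbatim, which is what you carry out in detail. Your added care about $\overline{F}=\overline{\Q}$, the fixed field of $G_F$ being $F$, and the translation being defined over $F$ is a faithful expansion of the same route, not a different one.
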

    \begin{proof}
        Replacing $\Q$ with $F$ in the proofs of Theorem \ref{n} and Corollary \ref{o}, every argument there follows verbatim, and one obtains this corollary. 
    \end{proof}
\subsection{Even and odd polynomials}\label{cubic}
    \begin{definition}
        Let $g$ be an element in $\C[a,b]$. we say $g$ is \emph{even polynomial} iff $g(a,b)=g(-a,-b)$, and $g$ is \emph{odd polynomial} iff $g(a,b)=-g(-a,-b)$.
    \end{definition}
    Every non-zero polynomial $g\in \C[a,b]$, can be written as $g=g_{e}+g_{o}$, where $g_{e}\in \C[a,b]$ is an even polynomial, and $g_{o}\in \C[a,b]$ is an odd polynomial.

    Let $G_{e}(\text{and},\; G_{o})$ denote the set of all even (and, odd) polynomials in $\C[a,b]$.
    \begin{lemma}\label{evenodddecomp}
        The sets $G_{e}, G_{o}$ are additive subgroups of $\C[a,b]$. The set $G:= G_{e}\cup G_{o}$ is closed under multiplication.  Also, if $g_{1},g_{2}\in G$, and $g_{1}=g_{2}\cdot h$, for some $h\in \C[a,b]$, then $h$ belongs to $G$.
    \end{lemma}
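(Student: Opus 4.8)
The plan is to encode the sign change $(a,b)\mapsto(-a,-b)$ as a ring involution and deduce all three assertions from its elementary formal properties, so that no genuine computation is needed. I would begin by introducing the $\C$-algebra automorphism $\iota\colon\C[a,b]\to\C[a,b]$ with $\iota(a)=-a$ and $\iota(b)=-b$; it satisfies $\iota^{2}=\mathrm{id}$, and by definition $g\in G_{e}$ exactly when $\iota(g)=g$, while $g\in G_{o}$ exactly when $\iota(g)=-g$. It is also worth recording at the outset that the decomposition $g=g_{e}+g_{o}$ into even and odd parts is unique, so that $G_{e}\cap G_{o}=\{0\}$ and ``$g$ is even or odd'' is an honest dichotomy for $g\in G$.

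For the subgroup claim, $G_{e}$ is the kernel of the additive map $g\mapsto\iota(g)-g$ and $G_{o}$ the kernel of $g\mapsto\iota(g)+g$, so both are additive subgroups; equivalently, one checks directly that sums and negatives of even (resp. odd) polynomials stay even (resp. odd) and that $0$ lies in both. For closure under multiplication, write $\iota(g_{i})=\epsilon_{i}g_{i}$ with $\epsilon_{i}\in\{1,-1\}$ for $g_{1},g_{2}\in G$; then $\iota(g_{1}g_{2})=\iota(g_{1})\iota(g_{2})=(\epsilon_{1}\epsilon_{2})\,g_{1}g_{2}$, which exhibits $g_{1}g_{2}$ as even when $\epsilon_{1}\epsilon_{2}=1$ and as odd when $\epsilon_{1}\epsilon_{2}=-1$; either way $g_{1}g_{2}\in G$.

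For the divisibility statement, suppose $g_{1}=g_{2}h$ with $g_{1},g_{2}\in G$ and $h\in\C[a,b]$; since $g_{2}=0$ forces $g_{1}=0$ with $h$ arbitrary, we may and do assume $g_{2}\neq 0$. Applying $\iota$ to $g_{1}=g_{2}h$ gives $\epsilon_{1}g_{1}=\epsilon_{2}g_{2}\,\iota(h)$, hence $\epsilon_{1}g_{2}h=\epsilon_{2}g_{2}\,\iota(h)$, and cancelling the nonzero factor $g_{2}$ in the integral domain $\C[a,b]$ yields $\iota(h)=(\epsilon_{1}\epsilon_{2})\,h$. Thus $h$ is even when $\epsilon_{1}\epsilon_{2}=1$ and odd when $\epsilon_{1}\epsilon_{2}=-1$, so $h\in G$ as desired.

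There is no real obstacle here; the only steps that merit a word of care are the uniqueness of the even--odd decomposition (used to make sense of the dichotomy and implicit in the last step) and the use of $\C[a,b]$ being an integral domain to cancel $g_{2}$. A computation-flavoured alternative that bypasses $\iota$ entirely is to plug $h=h_{e}+h_{o}$ into $g_{1}=g_{2}h$, note that $g_{2}h_{e}$ and $g_{2}h_{o}$ have opposite parities (by the multiplicative rule proved in the second step), and then invoke uniqueness of the decomposition of $g_{1}\in G$ to conclude that one of $g_{2}h_{e}$, $g_{2}h_{o}$ vanishes, forcing $h\in G_{o}$ or $h\in G_{e}$ respectively.
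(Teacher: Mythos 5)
Your proof is correct, and your main argument for the third assertion takes a slightly different route from the paper's. The paper treats the first two claims as immediate and proves only the divisibility statement, by contradiction: writing $h=h_{e}+h_{o}$ with $h_{e}\neq 0\neq h_{o}$, it observes that $g_{1}=g_{2}h_{e}+g_{2}h_{o}$ is then an even--odd decomposition of $g_{1}$ with two nonzero parts, contradicting $g_{1}\in G$ --- this is precisely the ``computation-flavoured alternative'' you sketch in your last paragraph, resting on uniqueness of the decomposition. Your primary argument instead packages the symmetry as the involution $\iota$ and cancels the nonzero factor $g_{2}$ in the integral domain $\C[a,b]$ to get $\iota(h)=\epsilon_{1}\epsilon_{2}h$ directly; this is cleaner in that it makes explicit where integrality is used and avoids any appeal to the decomposition's uniqueness, and it also handles the first two claims uniformly as kernel and eigenvector statements for $\iota$. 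One point in your favour: you correctly flag the degenerate case $g_{2}=0$ (where $h$ is arbitrary and the conclusion fails), which both the statement and the paper's proof silently ignore; the lemma is only ever applied to nonzero polynomials, so this is harmless, but it is worth having noted.
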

    \begin{proof}
        Only the last part of the lemma is non-trivial. We will prove the last part by contradiction. Let us assume that $h$ is neither even nor odd polynomial. So, $h$ admits an even-odd decomposition $h= h_{e} + h_{o}$, where $h_{e}$ is even polynomial, $h_{o}$ is odd polynomial and $h_{e} \neq 0 \neq h_{o}$. Now, $g_{2}$ being even or odd polynomial, $g_{1}$ admits an even-odd decomposition $g_{1} = g_{2} \cdot h_{e} + g_{2} \cdot h_{o}$, where $g_{2} \cdot h_{e} \neq 0 \neq g_{2}\cdot h_{o}$. Hence, we arrive at a contradiction. 
    \end{proof}
    \begin{lemma} \label{r}
        For $k\in \N\cup \{0\},n\in \N$, the polynomials $f_{k,n}$ are odd polynomials.
    \end{lemma}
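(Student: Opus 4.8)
The plan is to reduce the statement to a claim about the iterates of $f$ regarded as polynomials in $\Z[a,b]$. For $m\in\N\cup\{0\}$ I would set $P_{m}=P_{m}(a,b):=f^{m}(a)\in\Z[a,b]$, so that $f_{k,n}=P_{k+n}-P_{k}$. Since the additive group $G_{o}$ of odd polynomials in $\C[a,b]$ is closed under subtraction (Lemma \ref{evenodddecomp}), it is enough to show that every $P_{m}$ is odd; the difference $P_{k+n}-P_{k}$ is then automatically odd.

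Next I would exploit the fact that the involution $(a,b)\mapsto(-a,-b)$ on $\C^{2}$ is realised, at the level of polynomial maps, by conjugation with $\phi(z)=-z$. A one-line substitution gives $f_{-a,-b}(z)=-f_{a,b}(-z)$, i.e. $f_{-a,-b}=\phi\circ f_{a,b}\circ\phi$ (using $\phi=\phi^{-1}$), and therefore $f_{-a,-b}^{\,m}=\phi\circ f_{a,b}^{\,m}\circ\phi$ for every $m\geq 0$. Since the marked critical point of $f_{-a,-b}$ is $-a=\phi(a)$, this yields
\[
P_{m}(-a,-b)=f_{-a,-b}^{\,m}(-a)=\phi\bigl(f_{a,b}^{\,m}(a)\bigr)=-P_{m}(a,b),
\]
which is precisely the oddness of $P_{m}$. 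Subtracting the cases $m=k+n$ and $m=k$ then gives $f_{k,n}(-a,-b)=-f_{k,n}(a,b)$, as desired.

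If one prefers to avoid the conjugacy language, the same identity $P_{m}(-a,-b)=-P_{m}(a,b)$ follows by a routine induction on $m$: the base case is $P_{0}(a,b)=a$, and the inductive step amounts to substituting $(-a,-b)$ into $P_{m+1}=P_{m}^{3}-3a^{2}P_{m}+2a^{3}+b$ and noting that, under the inductive hypothesis, each of the four terms changes sign. I do not expect any genuine obstacle in this proof; the only point requiring care is that the marked critical point of $f_{-a,-b}$ is $-a$ rather than $a$, so that the orbit of the marked critical point of $f_{-a,-b}$ is matched with that of $f_{a,b}$ correctly.
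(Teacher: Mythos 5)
Your proof is correct and takes essentially the same approach as the paper: both reduce to showing that each iterate $f^{m}(a)$ is an odd polynomial in $\Z[a,b]$ and then conclude by subtracting. The paper phrases the key claim as ``every monomial of $f^{m}(z)$ has odd total degree,'' which is the same fact you establish via the conjugation identity $f_{-a,-b}=\phi\circ f_{a,b}\circ\phi$ (or, equivalently, your induction on $m$).
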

    \begin{proof}
        Let $l\geq 0$. Consider the polynomial $f^{l}(z)\in \Z[z,a,b]$. Every monomial term of $f^{l}(z)$ is of odd degree. Hence, same is true for $f^{l}(a)$. Therefore, $f^{l}(a)$ is an odd polynomial in $\Z[a,b]$ for any $l\geq 0$ and so is $f_{k,n}= f^{k+n}(a)-f^{k}(a)$, for any $k \geq 0,n \in \N$. 
    \end{proof}
    \begin{corollary}\label{s}
        Let $k\in \N\cup \{0\}, n\in \N$. If the polynomials $h_{l,m}$ are irreducible over $\Q$ for all $0\leq l\leq k,\; 1\leq m \leq n,\; (l,m)\neq (k,n) \in \Z^{2}$, then the polynomial $h_{k,n}$ is even or odd polynomial.
    \end{corollary}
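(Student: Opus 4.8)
The strategy is to prove that $h_{k,n}$ lies in the multiplicatively closed set $G=G_e\cup G_o$ of Lemma~\ref{evenodddecomp}; equivalently, that the involutive $\Z$-algebra automorphism $\tau$ of $\Z[a,b]$ defined by $(\tau g)(a,b)=g(-a,-b)$ satisfies $\tau h_{k,n}=\pm h_{k,n}$. By Lemma~\ref{r} the polynomial $f_{k,n}$ is odd, so $f_{k,n}\in G$. Writing $D:=\prod_i g_i^{\alpha_i}$ for the denominator in the definition of $h_{k,n}$, we have $f_{k,n}=D\cdot h_{k,n}$; since $f_{k,n}\in G$, the last assertion of Lemma~\ref{evenodddecomp} reduces everything to showing $D\in G$. (It would be more than enough, though it is \emph{not} necessary, for each individual $g_i$ to be even or odd; in general the $g_i$ need be neither, and it is only the product $D$ that is forced into $G$.)

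To see $D\in G$, observe first that $\tau$ permutes the finite list $\{g_1,\dots,g_r\}$ of distinct monic-in-$b$ $\Z$-irreducible factors used to build $h_{k,n}$. Indeed each $g_i$ divides $f_{l,m}$ for some $(l,m)\neq(k,n)$ with $l\le k$ and $m\mid n$; applying $\tau$ and invoking Lemma~\ref{r} once more gives $\tau g_i\mid\tau f_{l,m}=-f_{l,m}$, so $\tau g_i$ is again an irreducible factor of that same $f_{l,m}$, and after normalising it to be monic in $b$ it is therefore one of the $g_j$. Thus $\tau g_i=c_i\,g_{\pi(i)}$ for a permutation $\pi$ of $\{1,\dots,r\}$ and units $c_i\in\{\pm1\}$ (a unit, equal to $\pm1$ because $f_{k,n}$ and every $g_i$ are monic in $b$). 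Applying the automorphism $\tau$ to the defining property ``$\alpha_i$ is the largest power of $g_i$ dividing $f_{k,n}$'' and using $\tau f_{k,n}=-f_{k,n}$ shows $\alpha_{\pi(i)}=\alpha_i$, i.e.\ the exponents are constant on $\pi$-orbits. Consequently $\tau D=\prod_i(\tau g_i)^{\alpha_i}=\big(\prod_i c_i^{\alpha_i}\big)\prod_i g_{\pi(i)}^{\alpha_i}=\pm\,D$, the last equality being a reindexing of the product together with $\alpha_{\pi(i)}=\alpha_i$. Hence $D\in G$, and with $f_{k,n}\in G$ the last part of Lemma~\ref{evenodddecomp} gives $h_{k,n}\in G$, i.e.\ $h_{k,n}$ is even or odd.

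The only genuine work here is the bookkeeping, not any new input. One must check that $\{g_1,\dots,g_r\}$ is exactly the (automatically $\tau$-stable) set of \emph{all} monic-in-$b$ $\Z$-irreducible factors of the $f_{l,m}$ in the stated range, so that $\tau g_i$ really re-enters the list and $\pi$ is a bona fide permutation; that the relevant polynomials are monic in $b$ (which follows because $f^{j}(a)$ is monic in $b$ of degree $3^{j-1}$ for $j\ge1$), so that the units $c_i$ are unambiguous and no content issues arise; and that $\alpha_i$ really is $\tau$-invariant, for which one uses that $\tau$ is an automorphism fixing $f_{k,n}$ up to sign. An alternative route, which is where the hypothesis of the corollary becomes relevant, is to use Lemmas~\ref{l} and~\ref{k} to show that every $g_i$ divides $h_{l',m'}$ for some $(l',m')\neq(k,n)$ with $l'\le k$ and $m'\mid n$; by hypothesis each such $h_{l',m'}$ is irreducible over $\Q$, hence (Gauss's Lemma, monic in $b$) over $\Z$, so $g_i=\pm h_{l',m'}$. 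Then a short argument — using that $h_{l',m'}$ is coprime to every $f_{l,m}$ with $(l,m)\neq(l',m')$, $l\le l'$, $m\mid m'$, so that $\tau h_{l',m'}$ cannot equal, up to sign, an irreducible factor of $f_{l',m'}$ other than $h_{l',m'}$ itself — forces $\tau h_{l',m'}=\pm h_{l',m'}$; each $g_i$ then lies in $G$ and one concludes as above.
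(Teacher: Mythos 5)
Your main argument is correct, and it takes a genuinely different route from the paper's. The paper uses the hypothesis to write, for each $(l,m)$ in the range, $f_{l,m}=h_{l,m}\cdot\prod_{(j,r)}h_{j,r}^{a_{j,r}}$ (irreducibility of the $h_{j,r}$ over $\Q$, together with monicity in $b$, is what makes these the $\Z$-irreducible factors), and then inducts on $(l,m)$ starting from the odd polynomial $h_{0,1}=b-a$, applying the last part of Lemma~\ref{evenodddecomp} at each step to peel off $h_{l,m}$ from the odd polynomial $f_{l,m}$. You instead act with the involution $\tau:(a,b)\mapsto(-a,-b)$ directly on the denominator $D=\prod_i g_i^{\alpha_i}$: oddness of the $f_{l,m}$ (Lemma~\ref{r}) forces $\tau$ to permute the normalised irreducible factors $g_i$ up to sign, and $\tau f_{k,n}=-f_{k,n}$ forces the multiplicities $\alpha_i$ to be constant on the orbits of the induced permutation, so $\tau D=\pm D$ and a single application of the last part of Lemma~\ref{evenodddecomp} finishes. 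The bookkeeping you flag (that the list $\{g_i\}$ is $\tau$-stable, that the units are $\pm1$, that $\alpha_{\pi(i)}=\alpha_i$) all checks out in the UFD $\Z[a,b]$. What your version buys is that it is unconditional: the irreducibility hypothesis is never used, so you in fact prove the stronger statement that $h_{k,n}$ is always even or odd. Your sketched ``alternative route'' at the end is essentially the paper's inductive proof; that version additionally identifies the $g_i$ with the $h_{j,r}$ and establishes evenness or oddness of each intermediate $h_{l,m}$ along the way, but the later applications (Corollary~\ref{k23even}) do not need that extra information.
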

    \begin{proof}
        If the polynomials $h_{l,m}$ are irreducible over $\Q$ for all $0\leq l\leq k,\; 1\leq m,\; m|n,\; (l,m)\neq (k,n) \in \Z^{2}$, then for any such $(l,m)$ (including $(k,n)$), one can write
        \begin{equation*}
            f_{l,m}= h_{l,m} \cdot \prod_{\substack{0\leq j\leq l,\; 1\leq r,\; r|m,\\ (j,r)\neq (l,m) \in \Z^{2}}} h_{j,r}^{a_{j,r}}, \text{ for some } a_{j,r}\in \N
        \end{equation*}
        Observe that $h_{0,1}=f_{0,1}=b-a$ is an odd polynomial. Applying induction on both $l$ and $m$ such that $0\leq l\leq k, 1\leq m, m|n$, and using the last part of Lemma \ref{evenodddecomp}, one gets that $h_{k,n}$ is even or odd polynomial.
    \end{proof}
    \begin{lemma}\label{t}
        Let $g\in \C[a,b]$ be an even or odd polynomial. Let $h\in \C[a,b]$ be an irreducible polynomial, having a decomposition $h=h_{e}+h_{o},$ where $h_{e}$ is even polynomial, and $h_{o}$ is odd polynomial. Let $h^{\diamond}:= h_{e}-h_{o}= h(-a,-b)$. Then the following statements are true,

            1) $h^{\diamond}$ is irreducible.

            2) In $\C[a,b]$, $h$ divides $g \iff h^{\diamond}$ divides $g$.

            3) If $h$ is not even or odd polynomial, then $h$ and $h^{\diamond}$ are distinct (which means not equal upto associates) irreducible polynomials in $\C[a,b]$.
    \end{lemma}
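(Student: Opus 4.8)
The plan is to reduce all three parts to a single structural fact: the substitution $\iota\colon p(a,b)\mapsto p(-a,-b)$ is a ring automorphism of $\C[a,b]$ of order two, with $\iota(h)=h^{\diamond}$, and with $\iota(g)=g$ or $\iota(g)=-g$ according as $g$ is even or odd. The only auxiliary observation needed is that $G_{e}\cap G_{o}=\{0\}$: a polynomial that is both even and odd equals its own negative, hence vanishes. This is essentially already recorded in Lemma \ref{evenodddecomp} (uniqueness of the even–odd decomposition).

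For part 1, I would simply note that a ring automorphism carries irreducible elements to irreducible elements; since $h$ is irreducible, $h^{\diamond}=\iota(h)$ is irreducible. For part 2, suppose $h\mid g$ in $\C[a,b]$, say $g=qh$. Applying $\iota$ and using $\iota(g)=\pm g$ gives $\pm g=\iota(q)\,h^{\diamond}$, so $h^{\diamond}\mid g$. The reverse implication follows by applying $\iota$ once more, using that $\iota$ is an involution with $\iota(h^{\diamond})=h$ and $\iota(g)=\pm g$ again.

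For part 3, I would argue by contradiction: assume $h$ and $h^{\diamond}$ are associates, i.e. $h^{\diamond}=c\,h$ for some $c\in\C^{*}$. Comparing even and odd parts of $h_{e}-h_{o}=c\,h_{e}+c\,h_{o}$ yields $(1-c)h_{e}=(1+c)h_{o}$; the left-hand side lies in $G_{e}$ and the right-hand side in $G_{o}$, so by $G_{e}\cap G_{o}=\{0\}$ both sides are $0$. Since $h$ is assumed to be neither even nor odd, $h_{e}\neq 0$ and $h_{o}\neq 0$, forcing $1-c=0$ and $1+c=0$ simultaneously — a contradiction. Hence $h$ and $h^{\diamond}$ are distinct (non-associate) irreducible polynomials. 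None of these steps is a genuine obstacle; the entire lemma is a formal consequence of $\iota$ being an order-two automorphism together with uniqueness of the even–odd decomposition, so the "hard part" is merely to set up the automorphism $\iota$ cleanly at the outset and then invoke it uniformly.
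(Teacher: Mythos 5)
Your proposal is correct, and for parts 1 and 2 it is exactly the paper's argument: the change of variables $(a,b)\mapsto(-a,-b)$ is your automorphism $\iota$, and the paper likewise applies it to a factorization of $h^{\diamond}$ (resp.\ to $h\mid g$, using $g(a,b)=\pm g(-a,-b)$). The only real divergence is in part 3: the paper assumes $h^{\diamond}=c\,h$ and deduces that $h$ divides $h\pm h^{\diamond}=2h_{e},\,2h_{o}$, then gets a contradiction because one of $h_{e},h_{o}$ (the one not containing the top-degree homogeneous component of $h$) has degree strictly less than $\deg h$ while being nonzero; you instead compare even and odd parts of $h_{e}-h_{o}=c(h_{e}+h_{o})$ to get $(1-c)h_{e}=(1+c)h_{o}\in G_{e}\cap G_{o}=\{0\}$, forcing $c=1$ and $c=-1$ simultaneously. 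Your version is slightly cleaner in that it avoids any degree bookkeeping and relies only on uniqueness of the even--odd decomposition; both are complete and correct.
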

    \begin{proof}
        First two parts of the lemma follows from using the change of variable $(a,b) \rightarrow (-a,-b)$.

        For the third part of the lemma, if $h$ is not even or odd polynomial, then $h_{e}\neq 0 \neq h_{o}$. So, if $h$ and $h^{\diamond}$ are constant multiple of each other, then $h$ divides $h\pm h^{\diamond}$, which are the polynomials $2h_{e},2h_{o}$. As either $deg(h_{e})<deg(h)$ or $deg(h_{o})<deg (h)$, we get a contradiction.
    \end{proof}

\section{Irreducibility of $\mathscr{S}_{k,2}$ curves}\label{t''}
    From Equation \eqref{newmaineq}, we have the normal form
    \begin{equation}\label{maineqcopy}
        f(z) = z^{3} - 3a^{2}z + 2a^{3} + b, 
    \end{equation}
    with $\pm a$ as finite critical points.

    One observes the following factorization,
    \begin{equation}\label{u'}
        f(z)-f(w)= (z-w)(z^{2}+zw+w^{2}-3a^{2}).
    \end{equation}
    We first study the polynomial $h_{1,2}$. This will give us a glimpse into the general nature of the polynomials $h_{k,2},\; k \geq 0$. 
    \begin{lemma}\label{h123}
        The polynomial $h_{1,2}$ is $(b-a)^{2}+1$, which has the following properties:
        \begin{itemize}
            \item It is irreducible over $\Q$,
            \item It is reducible and smooth over $\C$,
            \item There is no $\Q$-rational point on the solution curve of $h_{1,2}$ in $\C^{2}$,
            \item The curve, $\mathscr{S}_{1,2}$, of $h_{1,2}$ in $\mathcal{M}_{3}$ is an irreducible line.
        \end{itemize}
    \end{lemma}
    \begin{proof}
        To obtain $h_{1,2}$, we need to factor out all irreducible factors of $f_{0,2}$ and $f_{1,1}$ from $f_{1,2}$ with each irreducible factor raised to their highest power that divides $f_{1,2}$. We compute,  
        \begin{align}
            f_{0,2} & =f(b)-a \notag\\
            & = b^{3} -3a^{2}b+2a^{3}+b-a \notag\\
            & = (b-a)\left((b-a)(b+2a)+1\right). \label{h023.}\\
            f_{1,1} & =f(b)-b \notag \\
            & =b^{3}-3a^{2}b+2a^{3}\notag \\
            & =(b-a)^{2}(b+2a).\label{f113}\\
            f_{1,2} & =f^{3}(a)-f(a)\notag \\
            & =(f^{2}(a)-a)\left((f^{2}(a))^{2}+af^{2}(a)-2a^{2}\right)\notag \\
            & = f_{0,2} \left((f(b))^{2}+af(b)-2a^{2}\right)\notag \\
            & = f_{0,2}\; (f(b)-a)(f(b)+2a)\notag \\
            & = (f_{0,2})^{2} \;(f(b)+2a)\label{f123byf023}\\
            & = (f_{0,2})^{2}\left(b^{3}-3a^{2}b+2a^{3}+b+2a \right)\notag \\
            & = (f_{0,2})^{2} (b+2a)\left((b-a)^{2}+1\right).\label{f123}
        \end{align}
        
        As  $(b-a)^{2}+1$ is irreducible over $\Q$, and $(b+2a)$ is a factor of $f_{1,1}$, we get 
        \begin{align}
            h_{1,2} & =(b-a)^{2}+1 \label{h12}\\
            & = (b-a+i)(b-a-i) \notag
        \end{align}
        Let us define $l_{1}(a,b):=(b-a+i),\; l_{2}(a,b):=(b-a-i)$. Now, it directly follows that $h_{1,2}$ is irreducible over $\Q$, reducible and smooth over $\C$, and has no $\Q$-rational point on it. Also, $l_{1}(-a,-b)=-l_{2}(a,b)$. As $\mathcal{M}_{3}$ is obtained by identifying $(a,b)$ with $(-a,-b)$ on $\C^{2}$, the lines $l_{1}$ and $l_{2}$ merge together in $\mathcal{M}_{3}$, making $\mathscr{S}_{1,2}$ an irreducible line in $\mathcal{M}_{3}$.  
    \end{proof}
    Next, we will show that $h_{0,2}$ is irreducible over $\C$. 
    \begin{lemma}\label{h023}
        The polynomial $h_{0,2}$ is $(b-a)(b+2a)+1$, and it is irreducible over $\C$.
    \end{lemma}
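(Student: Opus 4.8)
The plan is to first identify the polynomial $h_{0,2}$ explicitly, and then to check its irreducibility directly. From \eqref{h023.} we already have the factorization
\begin{equation*}
f_{0,2}=f(b)-a=(b-a)\bigl((b-a)(b+2a)+1\bigr).
\end{equation*}
To pass from $f_{0,2}$ to $h_{0,2}$ via the recipe in \eqref{hkn}, I must divide out the irreducible factors of $f_{l,m}$ for $l\le 0$, $m\mid 2$, $(l,m)\neq(0,2)$; the only such pair is $(l,m)=(0,1)$, and $f_{0,1}=f(a)-a=b-a$. Since $(b-a)(b+2a)+1$ does not vanish at $b=a$, the factor $b-a$ divides $f_{0,2}$ with multiplicity exactly one, so indeed
\begin{equation*}
h_{0,2}=(b-a)(b+2a)+1=b^{2}+ab-2a^{2}+1.
\end{equation*}

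For irreducibility over $\C$ I would argue as follows. The polynomial $h_{0,2}$ is monic of degree $2$ in $b$ over $\C[a]$, so no nonconstant element of $\C[a]$ can divide it, and hence any nontrivial factorization in $\C[a,b]$ must have the shape $h_{0,2}=(b-\alpha(a))(b-\beta(a))$ with $\alpha,\beta\in\C[a]$. Such a factorization exists if and only if the discriminant of $h_{0,2}$ with respect to $b$, namely $a^{2}-4(1-2a^{2})=9a^{2}-4=(3a-2)(3a+2)$, is a square in $\C[a]$. Since this discriminant has two distinct simple roots $\pm\tfrac{2}{3}$, it is not a square, so $h_{0,2}$ is irreducible over $\C$.

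Alternatively, and in keeping with the tools assembled in Section \ref{lemmas}, one can exhibit the smooth $\Q$-rational point $(a,b)=(\tfrac{2}{3},-\tfrac{1}{3})$ on the curve $\{h_{0,2}=0\}$ (smoothness is immediate since $\partial_{a}h_{0,2}=b-4a=-3\neq0$ there) and then apply Corollary \ref{o}, which reduces the claim to irreducibility over $\Q$; the latter again follows from the fact that $9a^{2}-4$ is not a square in $\Q[a]$.

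The argument carries essentially no obstacle; the only point demanding a little care is the bookkeeping in forming $h_{0,2}$ from $f_{0,2}$, that is, checking that $f_{0,1}=b-a$ is the unique factor to be removed and that it occurs to the first power only.
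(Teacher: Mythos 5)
Your identification of $h_{0,2}=(b-a)(b+2a)+1$ matches the paper's, and your extra bookkeeping (that $(0,1)$ is the only pair to remove and that $b-a$ occurs in $f_{0,2}$ to the first power only, since $(b-a)(b+2a)+1$ does not vanish at $b=a$) is correct and in fact slightly more explicit than the paper, which simply cites Equation \eqref{h023.} and $f_{0,1}=b-a$. Where you diverge is the irreducibility argument: the paper performs the invertible linear change of variables $(x,y)=(b-a,\,b+2a)$ (invertible since the determinant is $-3\neq 0$) and observes that $xy+1$ is manifestly irreducible in $\C[x,y]$, whereas you argue via the discriminant in $b$, computing $9a^{2}-4=(3a-2)(3a+2)$ and noting it is squarefree of positive degree, hence not a square in $\C[a]$, so no factorization into two monic-in-$b$ linear factors exists. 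Both arguments are complete and correct; the paper's is shorter and exploits the specific shape of $h_{0,2}$, while yours is the more routine method that would apply to any polynomial quadratic and monic in $b$. Your alternative route through Corollary \ref{o} with the smooth $\Q$-rational point $(\tfrac{2}{3},-\tfrac{1}{3})$ also checks out ($h_{0,2}=(-1)(1)+1=0$ and $\partial_{a}h_{0,2}=b-4a=-3$ there), though it is redundant given that your discriminant argument already works directly over $\C$.
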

    \begin{proof}
        From Equation \eqref{h023.} and the fact that $f_{0,1}=b-a$, we get that $h_{0,2} = (b-a)(b+2a)+1$. By a change of variable, one sees that $(b-a)(b+2a) +1$ is irreducible in $\C[a,b]$ iff $xy+1$ is irreducible in $\C[x,y]$. Hence, the lemma is proved.  
    \end{proof}
    In the last two lemmas, we have seen $h_{0,2}$ and $h_{1,2}$ are irreducible over $\Q$. Next, we study the irreducibility of $h_{k,2}$ polynomials, over $\Q$, where $k$ varies over all natural numbers greater than $1$. We will show that $h_{k,2}$ is $3$-Eisenstein with respect to the polynomial $h_{1,2}$. For that, we need to check that the three conditions in generalised Eisenstein irreducibility criterion (Theorem \ref{m}) hold. First, we check that the condition $1$ of Theorem \ref{m} holds.
    \begin{lemma}\label{k23E1}
        For any $k\in \N$, $h_{k,2}\equiv (h_{1,2})^{N_{k}}\;(\text{mod }3),$ for some $N_{k}\in \N$.
    \end{lemma}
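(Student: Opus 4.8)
The plan is to reduce everything modulo $3$ and exploit the Frobenius. In $\Z[a,b,z]$ one has $f(z)-f(w)\equiv(z-w)^{3}\pmod 3$, since $f(z)-f(w)=z^{3}-w^{3}-3a^{2}(z-w)$ and $z^{3}-w^{3}\equiv(z-w)^{3}\pmod 3$; iterating this congruence gives $f^{m}(z)-f^{m}(w)\equiv(z-w)^{3^{m}}\pmod 3$ for every $m\ge 0$. Applying it to the inner iterates of $a$, and using $f(a)=b$, yields for all $k$
\begin{equation*}
    f_{k,2}=f^{k}\!\big(f^{2}(a)\big)-f^{k}(a)\equiv\big(f^{2}(a)-a\big)^{3^{k}}=f_{0,2}^{\,3^{k}}\pmod 3,
    \qquad
    f_{k,1}=f^{k}(b)-f^{k}(a)\equiv (b-a)^{3^{k}}\pmod 3 .
\end{equation*}
Next I would compute $f_{0,2}\bmod 3$ directly: $f_{0,2}=f(b)-a\equiv b^{3}-a^{3}+b-a=(b-a)\big(b^{2}+ab+a^{2}+1\big)\equiv(b-a)\big((b-a)^{2}+1\big)\pmod 3$, using $b^{2}+ab+a^{2}\equiv(b-a)^{2}\pmod 3$; equivalently, $h_{0,2}=(b-a)(b+2a)+1\equiv(b-a)^{2}+1=h_{1,2}\pmod 3$ by Lemma~\ref{h123}. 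Combining, $f_{k,2}\equiv(b-a)^{3^{k}}\,h_{1,2}^{\,3^{k}}\pmod 3$.

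Both $b-a$ and $h_{1,2}\bmod 3=(b-a)^{2}+1$ are irreducible in $\F_{3}[a,b]$ (the latter because $-1$ is not a square in $\F_{3}$), and they are non-associate since $h_{1,2}(a,a)=1$; hence the displayed expression is the prime factorization of $f_{k,2}\bmod 3$ in the UFD $\F_{3}[a,b]$. As $h_{k,2}$ divides $f_{k,2}$ in $\Z[a,b]$ and is monic in $b$, its reduction divides $f_{k,2}\bmod 3$, so
\begin{equation*}
    h_{k,2}\equiv (b-a)^{s}\,h_{1,2}^{\,t}\pmod 3,\qquad 0\le s,t\le 3^{k},
\end{equation*}
and everything comes down to showing $s=0$.

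This last point is the step I expect to be the main obstacle, and the key is a divisibility over $\Z$. Write $D_{k}:=f_{k,2}/h_{k,2}=\prod_{i}g_{i}^{\alpha_{i}}$ as in \eqref{hkn}. The pair $(k,1)$ is one of the pairs $(l,m)$ with $l\le k$, $m\mid 2$, $(l,m)\neq(k,2)$ occurring in that definition, so every irreducible factor $g$ of $f_{k,1}$ is one of the $g_{i}$, and its exponent $\alpha_{i}$ equals the highest power of $g$ dividing $f_{k,2}$, which is at least the multiplicity of $g$ in $f_{k,1}$ (because $f_{k,1}\mid f_{k,2}$ by Lemma~\ref{l}). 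Therefore $f_{k,1}\mid D_{k}$ in $\Z[a,b]$; reducing mod $3$, the factor $(b-a)^{3^{k}}=f_{k,1}\bmod 3$ divides $D_{k}\bmod 3$, while $D_{k}\mid f_{k,2}$ forces the multiplicity of $b-a$ in $D_{k}\bmod 3$ to be at most $3^{k}$. So $b-a$ occurs to the exact power $3^{k}$ in $D_{k}\bmod 3$, and dividing $f_{k,2}\equiv(b-a)^{3^{k}}h_{1,2}^{3^{k}}$ by $D_{k}$ leaves $s=0$. This gives $h_{k,2}\equiv h_{1,2}^{\,N_{k,2}}\pmod 3$ with $N_{k,2}=t$; finally $N_{k,2}\ge 1$, since $h_{k,2}$ is non-constant (its zero locus is the curve $\Sigma_{k,2}$) and monic in $b$, so its reduction mod $3$ is non-constant.
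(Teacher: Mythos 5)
Your proof is correct and takes essentially the same route as the paper's: reduce modulo $3$ via the Frobenius congruence $f(z)-f(w)\equiv (z-w)^{3}$, obtain $f_{k,2}\equiv (b-a)^{3^{k}}\left((b-a)^{2}+1\right)^{3^{k}}\pmod 3$ and $f_{k,1}\equiv (b-a)^{3^{k}}\pmod 3$, and then use the divisibility $h_{k,2}\mid f_{k,2}/f_{k,1}$ (your formulation $f_{k,1}\mid f_{k,2}/h_{k,2}$ is the same statement) together with the irreducibility of $(b-a)^{2}+1$ in $\F_{3}[a,b]$. Your explicit justification that the factor $(b-a)^{3^{k}}$ is entirely absorbed by the cofactor, and that $N_{k,2}\geq 1$, only spells out what the paper leaves implicit.
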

    \begin{proof}
        From Equation \eqref{maineqcopy}, we have $f \equiv z^{3} - a^{3} +b\;(\text{mod }3)$. Hence, 
        \begin{align*}
             f_{k,2} & = f^{k+2} (a) - f^{k}(a)\\
             & = f^{k+1}(b) - f^{k-1}(b)\\
             & \equiv \sum_{i=k}^{k+1}(b-a)^{3^{i}}\; (\text{mod }3)\\
             & \equiv (b-a)^{3^{k}}\left((b-a)^{2\cdot 3^{k}}+1\right)\; (\text{mod }3)\\
             & \equiv (b-a)^{3^{k}}\left((b-a)^{2}+1\right)^{3^{k}}\;(\text{mod }3).
        \end{align*}
        Similarly, $f_{k,1}\equiv (b-a)^{3^{k}}\;(\text{mod }3)$. As $h_{k,2}$ divides $$\frac{f_{k,2}}{f_{k,1}}\equiv  \left((b-a)^{2}+1 \right)^{3^{k}} (\text{mod }3)$$ and the polynomial $(b-a)^{2}+1$ is irreducible modulo $3$, we have $h_{k,2}\equiv ((b-a)^{2}+1)^{N_{k}}\equiv (h_{1,2})^{N_{k}}\;(\text{mod }3)$ (by Lemma \ref{h123}), for some $N_{k}\in \N$. 
    \end{proof}
    As $h_{1,2}$ is irreducible modulo $3$, condition $2$ of generalised Eisenstein irreducibility criterion (Theorem \ref{m}) holds for $h_{1,2}$. For condition $3$ of Theorem \ref{m}, we need to study the resultant $\mbox{Res}(h_{k,2},h_{1,2})$, where $k\in \N, k>1$. To do that, we require some divisibility properties of $f_{k,2}$ and $f_{0,k}$, which we study in the Lemma \ref{divlemma}. 

    Let $g_{1},g_{2}\in \C[a,b]$. Let $o(g_{1},g_{2}):=\alpha,$ such that $\alpha\in \N\cup \{0\}, (g_{2})^{\alpha}|g_{1}, (g_{2})^{\alpha + 1}\nmid g_{1}$. In other words, $o(g_{1},g_{2})$ is defined to be the highest power of $g_{2}$ that divides $g_{1}$ in $\C[a,b]$.
    \begin{lemma}\label{divlemma}
        For any $k\in \N$, we have $o(f_{k,2},f_{0,2})\geq 2$. For any even $k\in \N$, we have $o(f_{0,k},f_{0,2})=o(f_{0,k},h_{0,2})=1$.
    \end{lemma}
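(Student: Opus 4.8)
The plan is to prove the two assertions with a common pair of tools: the factorization \eqref{u'} iterated, and a computation of the $b$-derivative of $f_{0,k}$. Throughout, write $Q(z,w):=z^{2}+zw+w^{2}-3a^{2}$ for the second factor in \eqref{u'}, so that $f(z)-f(w)=(z-w)Q(z,w)$.

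For the bound $o(f_{k,2},f_{0,2})\ge 2$, I would iterate \eqref{u'} to obtain, for all $X,Y\in\Z[a,b]$, the telescoping identity
$$f^{k}(X)-f^{k}(Y)=(X-Y)\prod_{j=0}^{k-1}Q\bigl(f^{j}(X),f^{j}(Y)\bigr),$$
and apply it with $X=f^{2}(a)$ and $Y=a$: the left-hand side becomes $f^{k+2}(a)-f^{k}(a)=f_{k,2}$, and $X-Y=f^{2}(a)-a=f_{0,2}$. The key observation is that the $j=0$ factor is $Q(f^{2}(a),a)=f^{2}(a)^{2}+af^{2}(a)-2a^{2}=\bigl(f^{2}(a)-a\bigr)\bigl(f^{2}(a)+2a\bigr)$, which supplies a second copy of $f_{0,2}$; hence $f_{0,2}^{2}\mid f_{k,2}$ for every $k\ge 1$ (for $k=1$ this recovers \eqref{f123byf023}). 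This step is immediate.

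For the equalities when $k$ is even, note first that $f_{0,2}\mid f_{0,k}$ by Lemma \ref{l}, and since $f_{0,2}=(b-a)h_{0,2}$ (Equation \eqref{h023.} together with $f_{0,1}=b-a$) also $h_{0,2}\mid f_{0,k}$; so both orders are $\ge 1$. To get the reverse inequalities it suffices to show that neither $f_{0,2}$ nor $h_{0,2}$ divides $\partial_{b}f_{0,k}$ in $\C[a,b]$, because $g^{2}\mid F$ forces $g\mid\partial_{b}F$. From $f^{k}(a)=f^{k-1}(a)^{3}-3a^{2}f^{k-1}(a)+2a^{3}+b$ one gets $\partial_{b}f^{k}(a)=f'\bigl(f^{k-1}(a)\bigr)\,\partial_{b}f^{k-1}(a)+1$ with $\partial_{b}f^{0}(a)=0$, which unwinds to $\partial_{b}f^{k}(a)=\sum_{j=0}^{k-1}\prod_{i=j+1}^{k-1}f'\bigl(f^{i}(a)\bigr)$. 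Modulo $f_{0,2}=f^{2}(a)-a$ one has $f^{i}(a)\equiv a$ for even $i$ and $f^{i}(a)\equiv b$ for odd $i$ (immediate by induction, using $f^{2}(a)\equiv a$), and since $f'(z)=3z^{2}-3a^{2}$ vanishes at $z=a$, every factor $f'(f^{i}(a))$ with $i$ even is $\equiv 0$. When $k$ is even, $k-1$ is odd, so the only products $\prod_{i=j+1}^{k-1}$ containing no even index are the empty one ($j=k-1$) and the singleton $\{k-1\}$ ($j=k-2$); hence $\partial_{b}f_{0,k}\equiv 1+\bigl(3b^{2}-3a^{2}\bigr)\pmod{f_{0,2}}$. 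It remains to check two non-divisibilities: $1+3b^{2}-3a^{2}$ has $b$-degree $2<3=\deg_{b}f_{0,2}$, so $f_{0,2}\nmid 1+3b^{2}-3a^{2}$; and $1+3b^{2}-3a^{2}=3h_{0,2}-3ab+3a^{2}-2$ with remainder of $b$-degree $1<2=\deg_{b}h_{0,2}$, so $h_{0,2}\nmid 1+3b^{2}-3a^{2}$. Therefore both orders equal $1$.

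Everything here is elementary; the one place that rewards care is the parity bookkeeping in the last step, namely checking that for \emph{even} $k$ exactly the two summands $j=k-1$ and $j=k-2$ survive modulo $f_{0,2}$ while every other product meets an even index $i$ (hence a factor $f'(f^{i}(a))\equiv 0$). This is also precisely where evenness of $k$ is used: for odd $k$ one instead finds $h_{0,2}\nmid f_{0,k}$, so the statement $o(f_{0,k},h_{0,2})=1$ genuinely fails.
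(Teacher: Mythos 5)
Your proof is correct. On the first assertion you and the paper do essentially the same thing: both arguments reduce to the factorization $Q(f^{2}(a),a)=(f^{2}(a)-a)(f^{2}(a)+2a)$, which the paper extracts from the explicit $k=1$ computation \eqref{f123byf023} before invoking $f_{1,2}\mid f_{k,2}$ (Lemma \ref{l}), and which you read off as the $j=0$ factor of the iterated identity \eqref{u'}. On the second assertion your route is genuinely different. The paper telescopes $f_{0,2l}=\sum_{i=1}^{l}f_{2i-2,2}$ and notes that every summand with $i\geq 2$ is divisible by $h_{0,2}^{2}$ by the first part, while $f_{0,2}=(b-a)h_{0,2}$ contributes $h_{0,2}$ exactly once, so the order of the sum is exactly $1$; it thus reuses the first assertion and avoids any derivative computation. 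You instead use the multiplicity criterion $g^{2}\mid F\Rightarrow g\mid\partial_{b}F$ together with the chain-rule expansion $\partial_{b}f^{k}(a)=\sum_{j}\prod_{i>j}f'\bigl(f^{i}(a)\bigr)$ and the vanishing of $f'$ at the critical point $a$, arriving at $\partial_{b}f_{0,k}\equiv 1+3b^{2}-3a^{2}\pmod{f_{0,2}}$; the degree checks against $f_{0,2}$ and $h_{0,2}$ (remainder $-3ab+3a^{2}-2\neq 0$) correctly close both cases, and the parity bookkeeping isolating the $j=k-1$ and $j=k-2$ terms is right. Your version is longer but makes explicit exactly where evenness of $k$ is used, and it is a transversality-style argument in the spirit of Thurston rigidity; the paper's version is shorter because the second claim is deduced directly from the first.
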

    \begin{proof}
        From Equation \eqref{f123byf023}, we get $f_{1,2}= (f_{0,2})^{2}\cdot \left(f(b)+2a\right)$. So, $o(f_{1,2},f_{0,2})\geq 2$. As $f_{1,2}$ divides $f_{k,2}$ for any $k\in \N$, the first part of the lemma follows.

        For even $k\in \N$,  we know that $o(f_{0,k},h_{0,2})\geq o(f_{0,k},f_{0,2})\geq 1$. Hence to prove both equalities of the lemma, it is enough to show that $o(f_{0,k},h_{0,2})=1$.
        As $k$ is even, let $k=2l,l\in \N$. Observe that,
        \begin{align*}
            f_{0,2l} & =f^{2l}(a)-a\\
            & =\sum_{i=1}^{l} \left(f^{2i}(a)-f^{2i-2}(a)\right)\\
            & =\sum_{i=1}^{l} f_{2i-2,2}.
        \end{align*}
        From the first part of this lemma, $o(f_{2i-2,2},h_{0,2})\geq o(f_{2i-2,2},f_{0,2})\geq 2$, for $i\in \N, i > 1$. Also, $o(f_{0,2},h_{0,2})=1$. Hence, $o(f_{0,k},h_{0,2})=o(f_{0,2l},h_{0,2})=1$.
    \end{proof}
    In the next lemma and the following corollary, we establish condition $3$ of generalised Eisenstein irreducibility criterion (Theorem \ref{m}) for $h_{k,2}$ and $h_{1,2}$.
    \begin{lemma}\label{k23res}
        Let $l=b-a+i \in \C[a,b]$. Up to multiplication by a power of $i$, The resultant $\mbox{Res}(h_{k,2},l)$ is,
        \begin{center}
            $\mbox{Res}(h_{k,2},l)= \left\{ \begin{array}{rcl}
                3(2ai+1); & k \mbox{ even,} & k>0, \\ 3a; & k \mbox{ odd,} & k>1. \\
            \end{array}\right.$
        \end{center}
    \end{lemma}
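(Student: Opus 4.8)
The plan is to reduce the resultant to an evaluation and then deal with a $0/0$ phenomenon. Since $l=b-a+i$ is monic of degree one in $b$, one has $\mbox{Res}(h_{k,2},l)=\pm\,h_{k,2}(a,a-i)$, the value of $h_{k,2}$ at $b=a-i$. The subtlety is that this substitution is of the form $0/0$: $l$ divides $h_{1,2}=(b-a)^{2}+1$, so $l\mid f_{1,2}\mid f_{k,2}$, and thus $f_{k,2}$ and also the proper lower-type factors that are divided out of it all vanish on $\{l=0\}$. So I would instead compute the order of vanishing of $f_{k,2}$ along $\{l=0\}$ together with the leading coefficient of its expansion there, and then divide by the corresponding data of the proper lower-type factors.

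Substituting $b=a-i$ one finds $f(a)=a-i$, $f^{2}(a)=-2a$, $f^{3}(a)=a-i$, so the critical orbit is eventually $2$-periodic: $f^{m}(a)=a-i$ for odd $m\ge1$ and $f^{m}(a)=-2a$ for even $m\ge2$. (Equivalently, $\{l=0\}$ is the line $l_{1}$ of Lemma~\ref{h123}.) Iterating the factorization \eqref{u'} gives $f_{k,2}=f_{0,2}\prod_{j=1}^{k}Q_{j}$ with $Q_{j}=\big(f^{j+1}(a)\big)^{2}+f^{j+1}(a)f^{j-1}(a)+\big(f^{j-1}(a)\big)^{2}-3a^{2}$, and one evaluates: $f_{0,2}(a,a-i)=-3a$; for $j\ge2$, $Q_{j}(a,a-i)=-3(2ai+1)$ if $j$ is even and $Q_{j}(a,a-i)=9a^{2}$ if $j$ is odd; while $Q_{1}=f_{0,2}\cdot\big(f^{2}(a)+2a\big)$ vanishes to order exactly one along $\{l=0\}$ with leading coefficient $6a(3ai+1)$. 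Hence $o(f_{k,2},l)=1$, and the leading coefficient $c_{k}$ of $f_{k,2}$ along $\{l=0\}$ equals $-18a^{2}(3ai+1)\prod_{j=2}^{k}Q_{j}(a,a-i)$ (recall $3ai+1=i(3a-i)$), an explicit product of the factors just listed whose shape depends only on the parity of $k$.

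Next I would divide out the proper lower-type factors. By induction on $k$ I may assume every $h_{j,r}$ with $(j,r)$ a proper predecessor of $(k,2)$ is irreducible over $\Q$, so Corollary~\ref{s} gives $f_{k,2}=h_{k,2}\prod_{(j,r)}h_{j,r}^{\alpha_{j,r}}$. Comparing orders along $\{l=0\}$ and using that $h_{1,2}=l\cdot(b-a-i)$ is the only proper factor divisible by $l$ (with $\alpha_{1,2}=1$), one gets $o(h_{k,2},l)=0$ (so the resultant is nonzero) and
\[
\mbox{Res}(h_{k,2},l)=\pm\,\frac{c_{k}}{(-2i)\cdot\prod_{(j,r)\neq(1,2)}h_{j,r}(a,a-i)^{\alpha_{j,r}}},
\]
using $(b-a-i)|_{b=a-i}=-2i$. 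The remaining factors are evaluated directly: $h_{0,1}(a,a-i)=-i$, and since $f_{j,1}(a,a-i)=(-1)^{j}(3a-i)$ for $j\ge1$ and $h_{1,1}=b+2a$, each $h_{j,1}(a,a-i)$ with $j\ge2$ is a unit of $\Z[i]$; $h_{1,1}(a,a-i)=3a-i$ with $\alpha_{1,1}=1$ (because $o(Q_{j},h_{1,1})=0$ for $j\ge2$), cancelling the factor $3a-i$ in $c_{k}$; $h_{0,2}(a,a-i)=-3ai$; and $h_{j,2}(a,a-i)=\mbox{Res}(h_{j,2},l)$ for $2\le j\le k-1$, known by the inductive hypothesis. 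Finally one computes the multiplicities from the $Q_{j}$: $\alpha_{j,2}=1$ for $2\le j\le k-1$ (since $o(Q_{j'},h_{j,2})=0$ for $j'>j$), and $\alpha_{0,2}=2+\#\{\text{odd }j\in[3,k]\}$ (here Lemma~\ref{divlemma} is the input that gives $o(Q_{j},h_{0,2})=1$ exactly for odd $j\ge3$, and $=0$ for even $j$). Substituting all of this, the powers of $3$, of $a$ and of $(2ai+1)$ supplied by the proper lower-type factors cancel against those in $c_{k}$, leaving $3(2ai+1)$ for even $k$ and $3a$ for odd $k$, up to a power of $i$.

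The hard part is this last bookkeeping: one must track the multiplicities $\alpha_{j,r}$ and the $\Z[i]$-constants carefully and check that, together with the inductively known values $\mbox{Res}(h_{j,2},l)$ for $j<k$, everything cancels except a single factor of $3$ (and the expected $a$ or $2ai+1$). Lemma~\ref{divlemma} is the essential input controlling those multiplicities, and the induction closes because the resultants for smaller $j$ are already available. The orbit computation and the $Q_{j}$-evaluations of the second paragraph are routine by comparison.
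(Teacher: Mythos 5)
Your approach is sound and all the individual ingredients check out (the orbit computation, the evaluations $Q_{j}(a,a-i)=-3(2ai+1)$ or $9a^{2}$, the order-one vanishing of $Q_{1}$ along $\{l=0\}$ with leading coefficient $6a(3ai+1)$, and the multiplicity counts via Lemma~\ref{divlemma} and the Thurston/Lemma~\ref{k} argument), and I have verified that the final cancellation you assert does produce $3(2ai+1)$ resp.\ $3a$ up to a power of $i$ in both parity cases. But your organization is genuinely different from the paper's, and markedly heavier. The paper divides only once, by $f_{k-1,2}$: since $f_{k-1,2}$ already contains every proper lower-type factor except those of $f_{k,1}/f_{k-1,1}$, the quotient $g_{k}=f_{k,2}/f_{k-1,2}$ differs from $h_{k,2}$ only by a power of $(b-a)$, possibly one copy of $h_{0,2}$ (detected by the congruence $g_{k}\equiv 3\bigl((f^{k-2}(b))^{2}-a^{2}\bigr)\pmod{f_{k-1,2}}$), and a factor $t_{k}$ of $f_{k,1}/f_{k-1,1}$ that evaluates to a unit at $b=a-i$; the resultant is then a one-line evaluation of $g_{k}(a,a-i)$, with no induction and no global multiplicity bookkeeping. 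Your telescoping $f_{k,2}=f_{0,2}\prod_{j}Q_{j}$ combined with the full factorization $f_{k,2}=h_{k,2}\prod h_{j,r}^{\alpha_{j,r}}$ forces you to (i) run an interleaved induction in which irreducibility of $h_{j,2}$ over $\Q$ for $j<k$ (Theorem~\ref{irrqk23}) and the values $\mbox{Res}(h_{j,2},l)$ are established simultaneously -- this is logically fine but restructures the paper's argument, which proves the lemma for all $k$ at once using only the independently known irreducibility of $h_{0,2}$ and $h_{1,2}$; and (ii) track every exponent $\alpha_{j,r}$ and every $\Z[i]$-constant through the quotient. That bookkeeping is exactly where an error would hide, and you leave it as an assertion rather than carrying it out; since it does close, this is not a gap, but the paper's single-quotient decomposition is the device that makes the whole computation short, and is worth adopting.
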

    \begin{proof}
        Let $k\in \N, k > 1$. We will first remove irreducible factors of $f_{k-1,2}$ in $\Z[a,b]$, from $f_{k,2}$ with each such factor raised to the highest power that divides $f_{k,2}$.
        Consider the polynomial, 
        \begin{align}\label{hk231}
             g_{k}(a,b):= \frac{f_{k,2}}{f_{k-1,2}} & = \frac{f^{k+1}(b)-f^{k-1}(b)}{f^{k}(b)-f^{k-2}(b)}\notag \\
             & = \left(f^{k}(b)\right)^{2}+f^{k}(b)f^{k-2}(b)+\left(f^{k-2}(b)\right)^{2}- 3a^{2}\notag \\
             & \equiv 3\left(\left(f^{k-2}(b)\right)^{2}-a^{2}\right)\; \left( \text{mod } f_{k-1,2}\right).
        \end{align}
        So, any irreducible polynomial $s_{k}\in \Z[a,b]$ that divides both $g_{k}$ and $f_{k-1,2}$, will also divide 
        \begin{equation}\label{pereq}
            3\left((f^{k-2}(b))^{2}-a^{2}\right)=3 f_{0,k-1} (f^{k-2}(b)+a).
        \end{equation}
        From Thurston's rigidity theorem (Theorem \ref{thurs}), we get that $f^{k-2}(b)+a$  and $f_{k-1,2}$ are coprime. So, $s_{k}$ will divide $f_{0,k-1}$ (we can remove $3$, because $f_{k-1,2}$ is monic in $\Z[a][b]$, and so are its irreducible factors). As $s_{k}$ divides both $f_{0,k-1}$ and $f_{k-1,2}$, by Lemma \ref{k} we have that $s_{k}$ divides $f_{0,1}$ if $k$ is even, and $f_{0,2}$ if $k$ is odd.

        \textbf{Let $k\in \N$ be even.} As $f_{0,1}= b-a$, we get that 
        $$h_{k,2}\; \text{ divides }\; \frac{g_{k}(a,b)}{(b-a)^{i_{k}}},$$ where $i_{k}$ is the highest power of $(b-a)$ that divides $g_{k}$. Also, 
        $$\frac{g_{k}(a,b)}{(b-a)^{i_{k}}}\;\text{ is coprime to }\;f_{k-1,2}.$$

        \textbf{Let $k\in \N, k>1, k$ odd.} 
        From Equation \eqref{h023.} and Lemma \ref{h023}, we know that, over $\Q$, the irreducible factors of $f_{0,2}$ are $h_{0,2}$ and $(b-a)$. From Equations \eqref{hk231}, \eqref{pereq} and Lemma \ref{divlemma}, we have $o(g_{k},h_{0,2})=1$, for all $k\in \N, k>1$. So, 
        $$h_{k,2}\; \text{ divides }\; \frac{g_{k}(a,b)}{h_{0,2}\cdot (b-a)^{i_{k}}},$$ 
        where $i_{k}$ is the highest power of $(b-a)$ that divides $g_{k}$. Also,
        $$\frac{g_{k}(a,b)}{h_{0,2}\cdot (b-a)^{i_{k}}}\;\text{ is coprime to }\;f_{k-1,2}.$$

        For $k\in \N, k>1$, let us define \begin{equation}\label{hk232}
            g'_{k}(a,b) := \left\{ \begin{array}{rcl}
                g_{k}(a,b)/(b-a)^{i_{k}}; & k &\mbox{even} \\ g_{k}(a,b)/(h_{0,2}\cdot (b-a)^{i_{k}}); & k & \mbox{odd} \\
            \end{array}\right.
        \end{equation}

        Next, we will factor out the irreducible factors of $f_{k,1}$ from $g'_{k}(a,b)$. But irreducible factors of $f_{k-1,1}$ has already been factored out, since $f_{k-1,1}$ divides $f_{k-1,2}$. Hence, we need to consider the irreducible factors of 
        \begin{equation}\label{5.10}
            \frac{f_{k,1}}{f_{k-1,1}}=(f^{k}(a))^{2}+f^{k}(a)f^{k-1}(a)+(f^{k-1}(a))^{2}-3a^{2},
        \end{equation}
        and their highest powers that divide $g'_{k}(a,b)$. Let's denote the product of common irreducible factors of $f_{k,1}/f_{k-1,1}$ and $g'_{k}$, each raised to their highest power that divides $g'_{k}$, as $t_{k}(a,b)$. By definition of $h_{k,2}$,
        \begin{equation}\label{hk233}
            h_{k,2}(a,b) = \frac{g'_{k}(a,b)}{t_{k}(a,b)}.
        \end{equation}

        Now, we can compute the resultant. We have, $\mbox{Res}(h_{k,2},l)=h_{k,2}(a,a-i)$.

        Putting $b=a-i$ in Equation \eqref{maineqcopy}, by direct computation or by using Lemma \ref{h123} one obtains $f^{k}(a)=a-i, $ for odd $k\in \N$, and $f^{k}(a)=-2a, $ for even $k\in \N$. Moreover, $h_{0,2}(a,a-i)= -i(-i+3a)+1=-3ai$.

        Observe that for any $k\in \N$,
        \begin{align*}
            \left(\frac{f_{k,1}}{f_{k-1,1}}\right)(a,a-i) & = (a-i)^{2}-2a(a-i)+4a^{2}-3a^{2}\\
            & =-1.
        \end{align*}
        So, $t_{k}(a,a-i)=1$, up to multiplication by a power of $i$ (because, any irreducible factor $t'_{k}$ of $t_{k}$ in $\Z[a,b]$ divides $f_{k,1}/f_{k-1,1}$ in $\Z[a,b]$. So in $\Z[i][a]$, $t'_{k}(a,a-i)$ divides $(f_{k,1}/f_{k-1,1})(a,a-i)=-1$).

        From the above discussion, we get that, up to multiplication by a power of $i$,

        \textbf{For $k$ even,}
        \begin{align*}
            \mbox{Res}(h_{k,2},l) & =h_{k,2}(a,a-i) & \\
            & = g_{k}(a,a-i) & (\mbox{by Equation }\eqref{hk232})\\
            & = 3(a-i)^{2}-3a^{2} & (\mbox{by Equation }\eqref{hk231})\\
            & =-3(2ai+1). & 
        \end{align*}
        
        \textbf{For $k> 1, k$ odd,}
        \begin{align*}
            \mbox{Res}(h_{k,2},l) & = h_{k,2}(a,a-i) & \\
            & = \frac{g_{k}(a,a-i)}{h_{0,2}(a,a-i)} & (\mbox{by Equation }\eqref{hk232})\\
            & =\frac{3(4a^{2})-3a^{2}}{-3ai} & (\mbox{by Equation }\eqref{hk231})\\
            & = 3ai. & 
        \end{align*}
        Hence, the lemma is proved.
    \end{proof}
    \begin{remark}
        Showing that $\mbox{Res}(h_{k,2},l)$ divides $3(2ai+1)$ for $k$ even, and $3a$ for $k$ odd, $k>1$, is all one needs to prove the irreducibility of $h_{k,2}$ in $\Q[a,b]$. A proof of that statement would be much shorter. But as we will see later, proving the equality, more precisely,  proving that $\mbox{Res}(h_{k,2},l)$ is not constant allows us to prove irreducibility of $\mathscr{S}_{k,2}$ in $\mathcal{M}_{3}$.
    \end{remark}
    \begin{corollary}\label{k23res2}
        The resultant $\mbox{Res}(h_{k,2},h_{1,2})\not\equiv 0\;($mod $81)$, for any $k\in \N, k>1$.
    \end{corollary}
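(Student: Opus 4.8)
The plan is to deduce Corollary \ref{k23res2} directly from Lemma \ref{k23res} by a short computation, keeping careful track of the relationship between $\mbox{Res}(h_{k,2},h_{1,2})$ and $\mbox{Res}(h_{k,2},l)$, where $l = b-a+i$. Since $h_{1,2} = (b-a)^2+1 = (b-a+i)(b-a-i) = l \cdot l^{\diamond}$ (with $l^{\diamond} = b-a-i$), multiplicativity of the resultant in the second argument gives $\mbox{Res}(h_{k,2},h_{1,2}) = \mbox{Res}(h_{k,2},l)\cdot \mbox{Res}(h_{k,2},l^{\diamond})$. The second factor is obtained from the first by applying the automorphism $i\mapsto -i$ (equivalently, by specializing $b = a+i$ instead of $b = a-i$ and conjugating), so the two resultants are complex conjugates of each other up to a power of $i$.

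First I would invoke Lemma \ref{k23res}: up to multiplication by a power of $i$, $\mbox{Res}(h_{k,2},l)$ equals $3(2ai+1)$ for even $k>0$ and $3a$ for odd $k>1$. Applying $i\mapsto -i$, $\mbox{Res}(h_{k,2},l^{\diamond})$ equals $3(-2ai+1)$ for even $k$ and $3a$ for odd $k$, again up to a power of $i$. Multiplying, for even $k$ one gets
\begin{equation*}
    \mbox{Res}(h_{k,2},h_{1,2}) = \pm\, 9(2ai+1)(-2ai+1) = \pm\, 9(4a^2+1),
\end{equation*}
up to a power of $i$, and for odd $k>1$ one gets $\mbox{Res}(h_{k,2},h_{1,2}) = \pm\, 9a^2$ up to a power of $i$. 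In either case the resultant is $9$ times a polynomial in $\Z[i][a]$ whose content is not divisible by $3$ (indeed $4a^2+1$ has unit constant term and $a^2$ is primitive), so $\mbox{Res}(h_{k,2},h_{1,2})$ is divisible by $9$ but not by $27$, hence not by $81$. (The cases $k=0$ and $k=1$ are excluded from the statement; for $k\le 1$ the relevant Eisenstein argument is not needed since $h_{0,2}$ and $h_{1,2}$ were already handled directly in Lemmas \ref{h023} and \ref{h123}.)

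The only subtlety—and the one step I would be careful about—is the bookkeeping with the ambiguous powers of $i$: one must check that multiplying the two "up to a power of $i$" expressions still yields something lying in $\Z[a]$ (not merely $\Z[i][a]$) up to a unit, and that no factor of $3$ is secretly absorbed by such a unit. Since units in $\Z[i]$ are $\pm 1, \pm i$ and none is divisible by $3$, this causes no loss, but it should be stated explicitly. With that, the $3$-adic valuation of $\mbox{Res}(h_{k,2},h_{1,2})$ is exactly $2$, in particular it is nonzero modulo $81 = 3^4$, and since $\deg_b(h_{1,2}) = 2$ this is precisely condition (3) of Theorem \ref{m} with $h = h_{1,2}$. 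This is the main obstacle only in the sense that it is the last verification needed before Theorem \ref{m} applies; the computation itself is immediate once Lemma \ref{k23res} is in hand.
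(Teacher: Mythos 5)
Your proposal is correct and follows essentially the same route as the paper: factor $h_{1,2}=(b-a+i)(b-a-i)$, use multiplicativity of the resultant together with Lemma \ref{k23res} and complex conjugation to get $9(4a^2+1)$ for even $k$ and $9a^2$ for odd $k>1$, neither of which vanishes modulo $81$. Your explicit remark that the units $\pm 1,\pm i$ of $\Z[i]$ cannot absorb a factor of $3$ is a sensible extra precaution, but it does not change the argument.
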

    \begin{proof}
        By Lemma \ref{h123}, we have
        \begin{align*}
            \mbox{Res}(h_{k,2},h_{1,2}) & =\mbox{Res}(h_{k,2},b-a+i)\cdot \mbox{Res}(h_{k,2},b-a-i)\\
            & =h_{k,2}(a,a-i)\cdot h_{k,2}(a,a+i).
        \end{align*}
        Complex conjugation of $h_{k,2}(a,a+i)$ is $h_{k,2}(a,a-i)$. Hence, from Lemma \ref{k23res}, upto multiplication by $\pm 1$, we have for $k$ even, 
        \begin{align*}
            \mbox{Res}(h_{k,2},h_{1,2}) & = 3(2ai+1)\cdot 3(-2ai+1)\\
            & = 9(4a^{2}+1),
        \end{align*}
        and for $k$ odd, $k>1$, 
        $$\mbox{Res}(h_{k,2},h_{1,2})=9a^{2}.$$ None of them is divisible by $81$. Hence, the corollary is proved.
    \end{proof}
    Next, we put together all the previous lemmas and corollary of this section, along with generalised Eisenstein irreducibility criterion (Theorem \ref{m}), to show that $h_{k,2}$ is irreducible over $\Q$, for every choice of non-negative integer $k$.
    \begin{theorem}\label{irrqk23}
        For each $k\in \Z, k \geq 0$, the polynomial $h_{k,2}$ is irreducible in $\Q[a,b]$. 
    \end{theorem}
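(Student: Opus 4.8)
The plan is to dispose of the cases $k = 0, 1$ using results already established, and then apply the generalised Eisenstein criterion (Theorem \ref{m}) uniformly for all $k \geq 2$ with the pair $(g, h) = (h_{k,2}, h_{1,2})$.

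For $k = 0$, Lemma \ref{h023} already gives that $h_{0,2} = (b-a)(b+2a) + 1$ is irreducible over $\C$, hence a fortiori over $\Q$. For $k = 1$, Lemma \ref{h123} identifies $h_{1,2} = (b-a)^2 + 1$, which is irreducible over $\Q$ because $-1$ is not a square in $\Q$; this settles $k = 1$.

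Now fix $k \geq 2$. I would check the three hypotheses of Theorem \ref{m} with $g = h_{k,2}$ and $h = h_{1,2}$. Both polynomials are monic in $\Z[a][b]$: since $f^{m}(a)$ is monic in $b$ for $m \geq 1$, every $f_{l,m}$ is monic in $b$, so its irreducible factors over $\Z$ may be taken monic in $b$, and hence so is the quotient $h_{k,2}$; and $h_{1,2} = (b-a)^2 + 1$ is visibly monic of degree $2$ in $b$. Condition (1), that $h_{k,2} \equiv h_{1,2}^{N_{k,2}} \pmod{3}$ for some $N_{k,2} \in \N$, is exactly Lemma \ref{k23E1}. Condition (2), that $h_{1,2} \bmod 3$ is irreducible in $\F_3[a,b]$, follows from Lemma \ref{h123}: after the substitution $x = b - a$ it reduces to the irreducibility of $x^2 + 1$ over $\F_3$, which holds since $-1$ is a non-square modulo $3$. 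Condition (3) asks that $\mbox{Res}(h_{k,2}, h_{1,2}) \not\equiv 0 \pmod{3^{2\deg(h_{1,2})}}$; since the $b$-degree of $h_{1,2}$ is $2$, the modulus is $3^4 = 81$, and this is precisely the assertion of Corollary \ref{k23res2}.

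Having verified all three conditions, Theorem \ref{m} yields the irreducibility of $h_{k,2}$ in $\Q[a,b]$ for every $k \geq 2$, which together with the cases $k = 0, 1$ completes the proof. At this stage there is no substantive obstacle remaining — the real work has been front-loaded into Lemmas \ref{k23E1} and \ref{k23res} and Corollary \ref{k23res2}, and in particular into the resultant computation of Lemma \ref{k23res}. The only point demanding care in the assembly is the bookkeeping around normalisation: one must be sure that the polynomials are monic in $b$ so that the relevant resultant in Theorem \ref{m} and Corollary \ref{k23res2} agree, and that $\deg(h_{1,2}) = 2$ so the exponent in condition (3) is genuinely $4$.
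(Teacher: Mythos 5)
Your proposal is correct and follows essentially the same route as the paper: handle $k=0,1$ via Lemmas \ref{h023} and \ref{h123}, then for $k\geq 2$ apply the generalised Eisenstein criterion (Theorem \ref{m}) with $g=h_{k,2}$, $h=h_{1,2}$, with the three hypotheses supplied by Lemma \ref{k23E1}, the mod-$3$ irreducibility of $(b-a)^{2}+1$, and Corollary \ref{k23res2}. Your extra remarks on monicity in $b$ and on $\deg(h_{1,2})=2$ are sound bookkeeping that the paper leaves implicit.
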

    \begin{proof}
        From Lemmas \ref{h123} and \ref{h023}, we know that $h_{0,2},h_{1,2}$ are irreducible over $\Q$. Let $k\in \N, k>1$. Putting $g=h_{k,2}, h=h_{1,2}$ in generalised Eisenstein irreducibility criterion (Theorem \ref{m}), from Lemmas \ref{h123}, \ref{k23E1} and Corollary \ref{k23res2}, we get that $h_{k,2}$ is irreducible in $\Q[a,b]$.  
    \end{proof}
    Next, we use irreducibility of $h_{k,2}$ and $h_{k,1}$ over $\Q$, to show that $h_{k,2}$ is even for every $k\geq 0$. We will need this following corollary in the proof of Theorem \ref{irrck23}. 
    \begin{corollary}\label{k23even}
        For each $k\in \Z, k\geq 0$, the polynomial $h_{k,2}$ is even polynomial.
    \end{corollary}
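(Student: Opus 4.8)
The plan is to deduce from Corollary \ref{s} that $h_{k,2}$ is either even or odd, and then to eliminate the odd alternative using the reduction modulo $3$ supplied by Lemma \ref{k23E1}.

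First I would verify the hypotheses of Corollary \ref{s} for the pair $(k,n)=(k,2)$. The positive divisors of $2$ are $1$ and $2$, so what is required is: the irreducibility over $\Q$ of $h_{l,2}$ for every $0\le l\le k-1$, which is precisely Theorem \ref{irrqk23}; and the irreducibility over $\Q$ of $h_{l,1}$ for every $0\le l\le k$, which is the theorem of Buff, Epstein and Koch \cite{article1} on the curves $\Sigma_{l,1}$ (and can in any case be obtained by the same generalised $3$-Eisenstein argument used above). Granting these, Corollary \ref{s} yields that $h_{k,2}$ is an even or an odd polynomial in $\C[a,b]$, and since $h_{k,2}\in\Z[a,b]$ the corresponding identity $h_{k,2}(a,b)=\pm h_{k,2}(-a,-b)$ already holds in $\Z[a,b]$.

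It remains to exclude the possibility that $h_{k,2}$ is odd. Any odd polynomial $g$ satisfies $g(0,0)=-g(0,0)$, hence vanishes at the origin. On the other hand, Lemma \ref{k23E1} gives $h_{k,2}\equiv h_{1,2}^{N_{k,2}}\pmod 3$, and $h_{1,2}=(b-a)^2+1$ by Lemma \ref{h123}, so $h_{k,2}(0,0)\equiv h_{1,2}(0,0)^{N_{k,2}}=1\pmod 3$; in particular $h_{k,2}(0,0)$ is a nonzero integer. Therefore $h_{k,2}$ cannot be odd, and we conclude that $h_{k,2}$ is even. (For the small cases $k=0,1$ one may instead read off evenness directly from the explicit formulas in Lemmas \ref{h023} and \ref{h123}.)

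This corollary is essentially a bookkeeping consequence of the results already proved; the only step that needs attention is checking that every hypothesis of Corollary \ref{s} is genuinely in place, in particular the correct invocation of the $n=1$ irreducibility from \cite{article1}. There is no real obstacle beyond that, since the substantive work lies in Theorem \ref{irrqk23} and Lemma \ref{k23E1}, both of which are already available.
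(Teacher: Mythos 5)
Your proposal is correct and follows essentially the same route as the paper: invoke Corollary \ref{s} (using Theorem \ref{irrqk23} for the $h_{l,2}$ and the known irreducibility of the $h_{l,1}$ over $\Q$, which the paper takes from Theorem 5.7 of \cite{Pat23} rather than from \cite{article1}) to conclude that $h_{k,2}$ is even or odd, and then rule out oddness via $h_{k,2}(0,0)\equiv h_{1,2}(0,0)^{N_{k,2}}\equiv 1\pmod 3$ from Lemmas \ref{h123} and \ref{k23E1}. The only caveat is the citation: irreducibility of the curves $\Sigma_{l,1}$ in $\mathcal{M}_3$ does not by itself give irreducibility of the polynomials $h_{l,1}$ over $\Q$, so the appropriate reference is the direct statement in \cite{Pat23} (or, as you note, the Eisenstein argument).
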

    \begin{proof}
        For $k=0,1$, the corollary follows from Lemmas \ref{h023}, \ref{h123}, respectively. Let $k\in \N, k>1$. From \cite[Theorem~$5.8$]{Pat23}, we get that $h_{l,1}$ polynomials are irreducible polynomials over $\Q$ for every choice of non-negative integer $l$. From Theorem \ref{irrqk23} above, for any $l\geq 0$, the polynomial $h_{l,2}$ is irreducible over $\Q$.  Hence, using Corollary \ref{s} we get that $h_{k,2}$ is an even or odd polynomial. From Lemmas \ref{h123}, \ref{k23E1},  we get $h_{k,2}(0,0)\equiv h_{1,2}(0,0)^{N_{k}} \equiv 1\;(\text{mod }3)$. So, the polynomial $h_{k,2}$ has a non-zero constant term. Hence, $h_{k,2}$ is an even polynomial, for any $k\geq 0$.
    \end{proof}
    Now, we will show that although $h_{k,2}$ might not be irreducible over $\C$, all the curves $\mathscr{S}_{k,2}$ are irreducible in $\mathcal{M}_{3}$.
    \begin{theorem}\label{irrck23}
        For each $k\in \Z, k \geq 0$, the curve $\mathscr{S}_{k,2}$ is irreducible.
    \end{theorem}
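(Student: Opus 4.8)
Here is how I would attack Theorem~\ref{irrck23}.

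The cases $k=0,1$ are already settled: $h_{0,2}$ is irreducible over $\C$ by Lemma~\ref{h023}, so $\Sigma_{0,2}$ is irreducible by Corollary~\ref{irrcoro}, while $\Sigma_{1,2}$ is handled in Lemma~\ref{h123}. So fix $k\in\N_{>1}$. By Lemma~\ref{dense}, $\Sigma_{k,2}$ is the image of the plane curve $V(h_{k,2})\subset\C^{2}$ under the quotient map $\pi\colon\C^{2}\to\mathcal{M}_{3}$. If $h_{k,2}$ is irreducible over $\C$, we are done immediately by Corollary~\ref{irrcoro} (equivalently, $\Sigma_{k,2}$ is then the continuous image of the irreducible curve $V(h_{k,2})$). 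So the real work is the case where $h_{k,2}$ is \emph{reducible} over $\C$, and I assume this from now on.

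The plan is first to produce, from Lemma~\ref{k23res}, a smooth $\Q[i]$-rational point on $V(h_{k,2})$, and then to use it to control the factorization. With $l=b-a+i$, that lemma says $\mbox{Res}(h_{k,2},l)$ equals, up to a unit of $\Z[i]$, the polynomial $3(2ai+1)$ when $k$ is even and $3a$ when $k$ is odd; this is $h_{k,2}(a,a-i)$, and in particular it is non-constant — of degree exactly $1$ in $a$ — with a single simple root $a_{0}\in\Q[i]$ (namely $a_{0}=i/2$ or $a_{0}=0$, in either case satisfying $\overline{a_{0}}=-a_{0}$), and it is not identically zero, so $l\nmid h_{k,2}$. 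Hence $V(h_{k,2})$ meets the line $\{b=a-i\}$ in the single point $P=(a_{0},a_{0}-i)\in\Q[i]^{2}$, with intersection multiplicity $1$; since the intersection multiplicity at a point dominates the multiplicity of the curve there, $P$ is a smooth point of $V(h_{k,2})$. Being smooth, $P$ lies on exactly one $\C$-irreducible component $V(g_{1})$ of $V(h_{k,2})$; as $\operatorname{Gal}(\overline{\Q}/\Q[i])$ fixes $P$ it must fix that component, so, after normalizing $g_{1}$ to be monic in $b$, we get $g_{1}\in\Q[i][a,b]$. Its conjugate $\overline{g_{1}}$ is again a monic-in-$b$ component of $h_{k,2}$, and it cannot equal $g_{1}$: otherwise $g_{1}\in\Q[a,b]$ would be a nonconstant divisor of the $\Q$-irreducible polynomial $h_{k,2}$ (Theorem~\ref{irrqk23}), hence equal to $h_{k,2}$, contradicting reducibility over $\C$. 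So $g_{2}:=\overline{g_{1}}\neq g_{1}$ is a second component, and $g_{1}g_{2}=g_{1}\overline{g_{1}}\in\Q[a,b]$ is a monic-in-$b$ divisor of $h_{k,2}$, whence $h_{k,2}=g_{1}g_{2}$ by Theorem~\ref{irrqk23} again. Thus $h_{k,2}$ has exactly two $\C$-irreducible factors $g_{1},g_{2}=\overline{g_{1}}\in\Q[i][a,b]$, and each carries a smooth $\Q[i]$-rational point ($P$ on $V(g_{1})$, $\overline{P}$ on $V(g_{2})$), so Corollary~\ref{p} reconfirms that both are irreducible over $\C$.

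It remains to show that $V(g_{1})$ and $V(g_{2})$ descend to the \emph{same} curve in $\mathcal{M}_{3}$, i.e.\ that $\iota\colon(a,b)\mapsto(-a,-b)$ interchanges them; by Remark~\ref{merge} this gives $\Sigma_{k,2}=\pi(V(g_{1}))$, which is irreducible as the image of an irreducible curve, finishing the proof. Since $h_{k,2}$ is even (Corollary~\ref{k23even}), $g_{1}^{\diamond}=g_{1}(-a,-b)$ is again a factor of $h_{k,2}$, so $g_{1}^{\diamond}\sim g_{1}$ or $g_{1}^{\diamond}\sim g_{2}$, and I must exclude the first. If $g_{1}^{\diamond}\sim g_{1}$, then $g_{1}$ is even or odd by Lemma~\ref{t}(3); it is not odd, because $h_{k,2}(0,0)\equiv h_{1,2}(0,0)^{N_{k,2}}\equiv 1\pmod 3$ (Lemmas~\ref{h123}, \ref{k23E1}) forces $g_{1}(0,0)\neq 0$; so $g_{1}$ would be even, making $V(g_{1})$ invariant under $\iota$. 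But then, using $\overline{a_{0}}=-a_{0}$, the point $\iota(P)=(-a_{0},-a_{0}+i)$ equals $\overline{P}$, which lies on $V(\overline{g_{1}})=V(g_{2})$ (since $g_{1}(P)=0$); and $\iota(P)$ also lies on $V(g_{1})$ by $\iota$-invariance. Hence $\iota(P)$ would lie on both components of $V(h_{k,2})$, which is impossible because $\iota(P)$ is a smooth point of $V(h_{k,2})$ ($h_{k,2}$ being defined over $\Q$ and smooth at $P$). Therefore $g_{1}^{\diamond}\sim g_{2}$, as required.

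The crux is the step producing and exploiting $P$: it is precisely the non-constancy of $\mbox{Res}(h_{k,2},l)$ in Lemma~\ref{k23res} — not merely the divisibility statement — that both pins the field of definition of the two factors down to $\Q[i]$ and, through the uniqueness and transversality of the intersection point, rules out the ``both factors even'' configuration, which is exactly the configuration that would break $\Sigma_{k,2}$ into two pieces.
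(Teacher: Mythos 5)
Your proof is correct and follows essentially the same route as the paper: a smooth $\Q[i]$-rational point produced from the non-constant, degree-one resultant of Lemma~\ref{k23res}, the splitting of $h_{k,2}$ into two complex-conjugate $\Q[i]$-irreducible factors forced by Theorem~\ref{irrqk23}, and the evenness of $h_{k,2}$ combined with the identity $\iota(P)=\overline{P}$ to show the two components merge into a single irreducible curve in $\mathcal{M}_{3}$. The only differences are organizational --- you obtain the two-factor decomposition by Galois descent through the unique component at the smooth point, and you rule out the ``both factors even'' configuration by contradiction rather than by directly checking that the conjugate factor is neither even nor odd --- and both versions are sound.
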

    \vspace{-0.1in}
    \begin{proof}
        From Lemmas \ref{h123}, \ref{h023}, we know that $\mathscr{S}_{0,2}, \mathscr{S}_{1,2} \subset \mathcal{M}_{3}$ are irreducible.

        Let $k>1$. From Lemma \ref{k23res}, we have that $h_{k,2}$ intersects the line $l= b-a+i$ at $(i/2,-i/2)$ point, for $k$ even, and at $(0,-i)$ point, for $k$ odd. As $\mbox{Res}(h_{k,2},b-a+i)$ is linear polynomial in $a$ (Lemma \ref{k23res}), $h_{k,2}$ is smooth at $(i/2,-i/2)$ point, for $k$ even, and at $(0,-i)$ point, for $k$ odd. So, for any $k>1$, the polynomial $h_{k,2}$ has a smooth $\Q[i]$-rational point.

        Let us assume that $h_{k,2}$ is irreducible in $\Q[i][a,b]$. As the polynomial $h_{k,2}$ has a smooth $\Q[i]$-rational point, by Corollary \ref{p}, we have that $h_{k,2}$ is irreducible in $\C[a,b]$. Hence, the curve $\mathscr{S}_{k,2}\in \mathcal{M}_{3}$ is irreducible.

        Next, let us assume that $h_{k,2}$ is reducible in $\Q[i][a,b]$. As $h_{k,2}$ is irreducible in $\Q[a,b]$, we get that $h_{k,2}= t_{k,2} \cdot \bar{t}_{k,2}$, for some irreducible polynomial $t_{k,2}\in \Q[i][a,b]$, and $\bar{t}_{k,2}$ is the complex conjugate of $t_{k,2}$.

        Now, for $k\in \N, k$ even, $h_{k,2}$ passes through the point $(i/2,-i/2)$. Without loss of generality, let us assume that $t_{k,2}(i/2,-i/2)=0$. By complex conjugation, we get $\bar{t}_{k,2}(-i/2,i/2)=0$. Also, $\bar{t}_{k,2}(i/2,-i/2)\neq 0$, otherwise $h_{k,2}$ will not be smooth at $(i/2,-i/2)$. Hence, $\bar{t}_{k,2}$ is not even or odd polynomial. But $h_{k,2}$ is even polynomial, by Corollary \ref{k23even}. As $\bar{t}_{k,2}$ is irreducible, from Lemma \ref{t}, we get that $t_{k,2}^{\diamond} = \bar{t}_{k,2}$, \ie  $t_{k,2}(-a,-b)=\bar{t}_{k,2}(a,b)$. So, the curves of $t_{k,2}$ and $\bar{t}_{k,2}$ in $\C^{2}$ merge together under the quotient map $\C^{2}\rightarrow \mathcal{M}_{3}$, and it is the same as the curve of $h_{k,2}$ in $\mathcal{M}_{3}$. Hence, if $t_{k,2}$ is irreducible over $\C$, then the curve of $h_{k,2}$ in $\mathcal{M}_{3}$, which is $\mathscr{S}_{k,2}$, is irreducible.

        So, to prove irreducibility of $\mathscr{S}_{k,2}$, it is enough to prove that $t_{k,2}$ is irreducible in $\C[a,b]$. Now, $t_{k,2}$ is an irreducible polynomial in $\Q[i][a,b]$, with a smooth $\Q[i]$-rational point, namely $(i/2,-i/2)$. Hence, by Corollary \ref{p}, we have $t_{k,2}$ is irreducible in $\C[a,b]$. So, for even $k \in \N$, $\mathscr{S}_{k,2}$ is irreducible in $\mathcal{M}_{3}.$

        Replacing the point $(i/2,-i/2)$ with $(0,-i)$ in the last two paragraphs, every argument there follows verbatim and we get that for odd $k>1$, $\mathscr{S}_{k,2}$ curves are irreducible in $\mathcal{M}_{3}$.
        Hence, the theorem is proved.
    \end{proof} 
\section{Explicit form of $h_{k,2}$ polynomials}\label{explicit}
In this section, we deduce the explicit form of $h_{k,2}$ polynomials, for any $k\geq 0$. We have already noted the explicit forms of $h_{0,2}$ and $h_{1,2}$ polynomials, in Lemmas \ref{h023} and \ref{h123}, respectively.
\begin{proposition}\label{exp}
    Let $k\in \N$. For odd $k$,
    \begin{equation}
        h_{k,2} = \frac{f_{k,2}\cdot f_{k-1,1}}{h_{0,2}\cdot f_{k-1,2}\cdot f_{k,1}},
    \end{equation}
    and for even $k$,
    \begin{equation}
        h_{k,2} = \frac{f_{k,2}\cdot f_{k-1,1}}{f_{k-1,2}\cdot f_{k,1}}.
    \end{equation}
\end{proposition}
\begin{proof}
    Let $k=1$. From Equations \eqref{h023.}, \eqref{f113}, \eqref{f123} and \eqref{h12}, we get 
    \begin{equation}
        h_{1,2} = \frac{f_{1,2}\cdot f_{0,1}}{h_{0,2}\cdot f_{0,2}\cdot f_{1,1}}.
    \end{equation}
    Let $k\in \N, k>1$. First, we will show that the polynomial $f_{k,2}/f_{k-1,2}$ is divisible by $f_{k,1}/f_{k-1,1}$. From Equations \eqref{hk231} and \eqref{5.10},
    \begin{align}
        \frac{f_{k,2}}{f_{k-1,2}} & = (f^{k}(b))^{2} + f^{k}(b)f^{k-2}(b) + (f^{k-2}(b))^{2} - 3a^{2} \label{6.3}\\
        \frac{f_{k,1}}{f_{k-1,1}} & = (f^{k-1}(b))^{2} + f^{k-1}(b)f^{k-2}(b) + (f^{k-2}(b))^{2} - 3a^{2}\label{6.4}
    \end{align}
    Subtracting Equation \eqref{6.4} from Equation \eqref{6.3}, we get
    \begin{equation}\label{6.5}
        \frac{f_{k,2}}{f_{k-1,2}} =  \frac{f_{k,1}}{f_{k-1,1}} + f_{k,1} \cdot (f^{k}(b) + f^{k-1}(b) + f^{k-2}(b)).
    \end{equation}
    From Equation \eqref{6.5}, we get that $f_{k,1}/f_{k-1,1}$ divides $f_{k,2}/f_{k-1,2}$ in $\Z[a,b]$. Furthermore, 
    \begin{equation}
        l_{k}:= \frac{f_{k,2}\cdot f_{k-1,1}}{f_{k-1,2}\cdot f_{k,1}} = 1 + f_{k-1,1} \cdot (f^{k}(b) + f^{k-1}(b) + f^{k-2}(b)).
    \end{equation}
    
    So, no factor of $f_{k-1,1}$ divides $l_{k}$. From \cite{Pat23}, we know that $h_{k,1}$ polynomials are irreducible over $\C$ with zero constant term. As $l_{k}$ has constant term $1$, $h_{k,1}$ does not divide $l_{k}$. So, no irreducible factor of $f_{k,1}$ divides $l_{k}$. As $(b-a)$ divides $f_{k,1}$, from the proof of Lemma \ref{k23res} up to the Equation \eqref{hk232}, the proposition follows.
\end{proof}
\begin{remark}
    Let $k\in \N, k>1$. Using the explicit form of $h_{k,2}$ polynomials, we can find the exact value of the resultant of $h_{k,2}$ and $b-a+i$ to be, 
    \begin{center}
            $\mbox{Res}(h_{k,2},b-a+i)= \left\{ \begin{array}{rcl}
                3(2ai+1); & k \mbox{ even,} & k>0, \\ -3ai; & k \mbox{ odd,} & k>1. \\
            \end{array}\right.$
        \end{center}
    We note that introducing the explicit form of $h_{k,2}$ in the proof of irreducibility of $\mathscr{S}_{k,2}$ above, is unnecessary and does not make the proof any shorter. Therefore, we have kept it as a separate section. 
\end{remark}
\section{The unicritical case}\label{unicritical}
    Putting $a=0$ in Equation \eqref{newmaineq}, we get the normal form for monic, reduced, unicritical cubic polynomial, $$f(z)= z^{3} +b.$$ Let $R_{k,n}$ be the polynomial in $\Z[b]$, whose roots are exactly the values of $b$ for which $0$ is strictly $(k,n)$-preperiodic under $f(z)= z^{3}+b$. Putting $a=0$ in Equation \eqref{hkn}, we get that $R_{k,n}$ divides $h_{k,n}(0,b)$ in $\Z[b]$. In \cite{Milnor+2014+15+24}, Milnor conjectured that,
    \begin{conjecture}
        The polynomial $R_{k,n}$ is either constant or irreducible over $\Q$, for any $k\geq 0, n\geq 1$.
    \end{conjecture}
    In this section, we prove the irreducibility of $R_{k,2}$ over $\Q$, for any $k \geq 0,\; k$ even.
    \begin{remark}\label{coales}
        The polynomial $R_{k,n}$ can be constant for some $(k,n)\in \Z^{2}, k\geq 0, n\geq 1$. For example, $R_{1,n}$ is equal to one for any $n\in \N$. This can be shown from the following observation: if the totally ramified critical point $0$ is $(1,n)$-preperiodic for the polynomial $f(z)=z^{3} + b$, then $0$ is $n$-periodic too.
    \end{remark}
    \begin{theorem}\label{uni}
        For any even $k\geq 0$, the polynomial $R_{k,2}$ is either constant or an irreducible polynomial over $\Q$. 
    \end{theorem}
    \begin{proof}
         Case $k=0:$ From Lemma \ref{h023}, the polynomial $h_{0,2}$ is $(b-a)(b+2a)+1$. As $h_{0,2}(0,b)= b^{2}+1$ is irreducible over $\Q$, the polynomial $R_{0,2}$ is same as $h_{0,2}(0,b)$ and it is irreducible over $\Q$.
         
         Case $k\in \N, k$ even: From section \ref{t''}, the polynomial $h_{k,2}$ is $3$-Eisenstein with respect to the polynomial $h_{1,2}=(b-a)^{2}+1$. From the proof of Corollary \ref{k23res2}, we get that for any even $k\in \N$, the resultant $$\mbox{Res}(h_{k,2},h_{1,2})= 9(4a^{2}+1).$$ For any $k\in \N$, $\;h_{k,2}$ is monic as polynomial in $b$ over the integral domain $\Z[a]$. Hence, degree of $h_{k,2}$ as a polynomial in $b$ over $\Z[a]$ is same as the degree of $h_{k,2}(0,b)$ as an element of $\Z[b]$. By invariance of resultant under ring homomorphisms that preserve the degree of the polynomials, we see that $$\mbox{Res}(h_{k,2}(0,b), h_{1,2}(0,b))= \mbox{Res}(h_{k,2}, h_{1,2}) (0) =9.$$  Hence, the polynomial $h_{k,2}(0,b)$ is $3$-Eisenstein with respect to the polynomial, $h_{1,2}(0,b)= b^{2}+1$. So for even $k$, the polynomial $h_{k,2}(0,b)$ is irreducible over $\Q$. As $R_{k,2}$ divides $h_{k,2}(0,b)$, we get that $R_{k,2}$ is either constant or an irreducible polynomial in $\Q[b]$.
    \end{proof}
    \begin{remark}
        For $k>1, k$ odd, the resultant $\mbox{Res}(h_{k,2}, h_{1,2})=9a^{2}$ (from the proof of Lemma \ref{k23res2}). So, $$\mbox{Res}(h_{k,2}(0,b), h_{1,2}(0,b))= \mbox{Res}(h_{k,2}, h_{1,2}) (0) =0.$$ This means that $b^{2}+1$ divides $h_{k,2}(0,b)$, for $k$ odd, $k>1$. This restricts a generalization of proof of Theorem \ref{uni} to the case, $k$ odd, $k>1$. 
    \end{remark}
    \begin{remark}
        Theorem \ref{uni} partially proves Milnor's conjecture (\cite{Milnor+2014+15+24}) on the unicritical case. A stronger version of this theorem has been proved in \cite{Gok20}. A parallel version of this theorem that arises from replacing the normal form of unicritical cubic polynomial $f(z)= z^{3}+b$ with the normal form $g(z)= cz^{3}+1$, has been proved by Buff, Epstein and Koch in \cite{article1}.
    \end{remark}
\section{On $\mathscr{S}_{k,q}$ curves}\label{kqcurves}
In this section, we study the obstructions to direct generalization of our technique to study irreducibility of $\mathscr{S}_{k,q}$ curves, where $k\geq 0, q$ odd prime.
    \begin{lemma}
        For any prime $q\in \N$, we have $h_{1,q}=(f^{q}(a)+2a)/(b+2a)$. In another form, $h_{1,q}\equiv h_{0,q} \equiv \sum_{i=0}^{q-1}(b-a)^{3^{i}-1} (\text{mod }3)$.
    \end{lemma}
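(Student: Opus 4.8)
The plan is to first establish the exact identity $f_{1,q}=f_{0,q}^{\,2}\,\bigl(f^{q}(a)+2a\bigr)$ and then carefully track which irreducible factors, and with which multiplicities, are removed in passing from $f_{1,q}$ to $h_{1,q}$. For the identity, write $f_{1,q}=f^{1+q}(a)-f(a)=f\bigl(f^{q}(a)\bigr)-f(a)$ and apply the factorisation $f(z)-f(w)=(z-w)(z^{2}+zw+w^{2}-3a^{2})$ of Equation \eqref{u'} with $z=f^{q}(a)$, $w=a$; since $z^{2}+az-2a^{2}=(z-a)(z+2a)$ this gives $f_{1,q}=(f^{q}(a)-a)^{2}(f^{q}(a)+2a)=f_{0,q}^{\,2}(f^{q}(a)+2a)$. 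Next, setting $b=-2a$ one checks $f(a)=b=-2a$, hence $f^{m}(a)=-2a$ for all $m\ge1$, so $(b+2a)\mid f^{q}(a)+2a$ and $P:=(f^{q}(a)+2a)/(b+2a)\in\Z[a,b]$.

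Since $q$ is prime, the polynomials $f_{l,m}$ with $l\le1$, $m\mid q$, $(l,m)\ne(1,q)$ are exactly $f_{0,1}=b-a$, $f_{1,1}=(b-a)^{2}(b+2a)$, and $f_{0,q}$, so $h_{1,q}$ is $f_{1,q}$ with the irreducible factors of $f_{0,q}$ and of $b+2a$ divided out, each to its highest power dividing $f_{1,q}$. The bookkeeping rests on four facts: (i) $f_{0,q}$ is monic in $\Z[a][b]$, and any common irreducible factor of $f_{0,q}$ and $f^{q}(a)+2a$ divides their difference $3a$, hence is a unit, so $f_{0,q}$ is coprime to $f^{q}(a)+2a=(b+2a)P$; (ii) $(b-a)\mid f_{0,q}$ (as $a$ is a fixed point when $b=a$) but $(b-a)^{2}\nmid f_{0,q}$; (iii) $(b-a)\nmid f^{q}(a)+2a$ (its value at $b=a$ is $3a\ne0$) and $(b+2a)\nmid f_{0,q}$ (its value at $b=-2a$ is $-3a\ne0$); (iv) $(b+2a)^{2}\nmid f^{q}(a)+2a$. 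Granting these, the $(b-a)$-, $(b+2a)$-, and $p$-adic valuations of $f_{1,q}=f_{0,q}^{\,2}(b+2a)P$ (for $p$ any other irreducible factor of $f_{0,q}$) work out to $2$, $1$, and twice the valuation of $f_{0,q}$, so the product divided out is exactly $f_{0,q}^{\,2}(b+2a)$ and $h_{1,q}=P=(f^{q}(a)+2a)/(b+2a)$.

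The facts $(b-a)^{2}\nmid f_{0,q}$ and $(b+2a)^{2}\nmid f^{q}(a)+2a$, together with the second form of the lemma, I would obtain from reduction mod $3$. From $z^{2}+zw+w^{2}\equiv(z-w)^{2}\pmod 3$ one has $f(z)-f(w)\equiv(z-w)^{3}\pmod 3$, hence $f^{m}(z)-f^{m}(w)\equiv(z-w)^{3^{m}}\pmod 3$ by induction; telescoping $f^{j}(a)-f^{j-1}(a)\equiv(b-a)^{3^{j-1}}\pmod3$ over $j=1,\dots,q$ gives $f_{0,q}\equiv\sum_{i=0}^{q-1}(b-a)^{3^{i}}=(b-a)\sum_{i=0}^{q-1}(b-a)^{3^{i}-1}\pmod3$. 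The $i=0$ summand is $1$, so $b-a$ divides $f_{0,q}\bmod3$ exactly once; since a $(b-a)^2$ or $(b+2a)^2$ divisibility over $\Z$ would persist mod $3$ (using $b+2a\equiv b-a$), this yields (ii) and (iv). Finally $f^{q}(a)+2a\equiv f^{q}(a)-a=f_{0,q}$ and $b+2a\equiv b-a\pmod3$ give $h_{1,q}\equiv f_{0,q}/(b-a)\equiv\sum_{i=0}^{q-1}(b-a)^{3^{i}-1}\pmod3$; and since the only factor removed in forming $h_{0,q}$ is $f_{0,1}=b-a$, with multiplicity one, also $h_{0,q}=f_{0,q}/(b-a)\equiv\sum_{i=0}^{q-1}(b-a)^{3^{i}-1}\pmod3$, so $h_{1,q}\equiv h_{0,q}\pmod3$.

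The one genuinely delicate point is the multiplicity bookkeeping of the second paragraph — concretely, ruling out $(b+2a)^{2}\mid f^{q}(a)+2a$ and $(b-a)^{2}\mid f_{0,q}$ — which is where the mod-$3$ normal form does real work; the rest is the single factorisation identity of Equation \eqref{u'} plus elementary valuation arithmetic, and primality of $q$ enters only to keep the list of $f_{l,m}$ to be removed down to $f_{0,1},f_{1,1},f_{0,q}$.
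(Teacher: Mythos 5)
Your proof is correct and follows essentially the same route as the paper: the factorisation $f_{1,q}=f_{0,q}^{2}\,(f^{q}(a)+2a)$ via Equation \eqref{u'}, coprimality of $f_{0,q}$ with $f^{q}(a)+2a$ through the residue $3a$, and the mod-$3$ normal form $f\equiv z^{3}-a^{3}+b$ to rule out $(b+2a)^{2}\mid f^{q}(a)+2a$ and to obtain the second form. Your multiplicity bookkeeping is merely a more explicit version of the paper's one-line reduction to $h_{1,q}=(f^{q}(a)+2a)/h_{1,1}^{s}$.
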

    \begin{proof}
        We deduce,
        \begin{align*}
            f_{1,q} & =f^{q+1}(a)-f(a)\\
            & =(f^{q}(a)-a) \left((f^{q}(a))^{2}+af^{q}(a)+a^{2}-3a^{2}\right)\\
            & =(f^{q}(a)-a)^{2} (f^{q}(a)+2a)\\ 
            & = (f_{0,q})^{2} (f^{q}(a)+2a).
        \end{align*}
        So, $h_{1,q}$ divides $f^{q}(a)+2a$. As $f^{q}(a)+2a\equiv 3a\;(\text{mod }f_{0,q})$, the polynomials$f^{q}(a)+2a$  and $f_{0,q}$ are coprime.

        As we obtain $h_{1,q}$ by factoring out irreducible factors of $f_{1,1}$ and $f_{0,q}$ from $f_{1,q}$, each raised to their highest power that divides $f_{1,q}$, we get that $$h_{1,q}= \frac{f^{q}(a)+2a}{(h_{1,1})^{s}},$$ where $s$ is the highest power of $h_{1,1}$ that divides $f^{q}(a)+2a$.

        By Equation \eqref{f113}, we have $h_{1,1}=b+2a$. Putting $b=-2a$, we see that $f^{n}(a)=-2a, $ for any $ n\in \N$. So, $b+2a$ divides $f^{q}(a)+2a$. We need to check if $(b+2a)^{2}$ divides $f^{q}(a)+2a$. As $f\equiv z^{3}-a^{3} +b \;(\text{mod } 3)$, we have $$f^{q}(a)+2a \equiv f^{q}(a)-a\equiv \sum_{i=0}^{p-1} (b-a)^{3^{i}}\;(\text{mod }3).$$ As $(b-a)^{2}$ does not divide $f^{q}(a)+2a$ in modulo $3$, we get that $(b+2a)^{2}$ does not divide $f^{q}(a)+2a$ in $\Z[a,b]$. Hence, $$h_{1,q}=\frac{f^{q}(a)+2a}{b+2a}.$$ Reducing this equation in modulo $3$, we obtain the other form of $h_{1,q}$ mentioned in the lemma. To show $h_{1,q}\equiv h_{0,q}(\text{mod }3)$, observe that $$f_{0,q}\equiv (b-a) \sum_{i=0}^{q-1} (b-a)^{3^{i}-1}\;(\text{mod }3).$$ Hence, the lemma is proved. 
    \end{proof}
    \begin{lemma}
        Let $k\in \N $ and $q\in \N, q$ prime. If $h_{1,q}$ is irreducible in $\F_{3}[a,b]$, then $h_{k,q}\equiv (h_{1,q})^{N_{k,q}}(\text{mod }3)$, for some $N_{k,q}\in \N$.
    \end{lemma}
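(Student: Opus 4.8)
The plan is to mimic the proof of Lemma \ref{k23E1} with the prime $2$ replaced by the prime $q$; the hypothesis that $h_{1,q}$ is irreducible in $\F_{3}[a,b]$ is exactly what is needed for the argument to close.

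First I would reduce the iterates modulo $3$. From Equation \eqref{maineqcopy} we have $f(z)\equiv z^{3}-a^{3}+b\ (\text{mod }3)$, hence $f_{0,l}=f^{l}(a)-a\equiv(f_{0,l-1})^{3}+(b-a)\ (\text{mod }3)$ by the Frobenius identity $x^{3}-y^{3}\equiv(x-y)^{3}\ (\text{mod }3)$, and induction on $l$ yields $f_{0,l}\equiv\sum_{i=0}^{l-1}(b-a)^{3^{i}}\ (\text{mod }3)$. Consequently, for every $k\geq 0$,
\begin{equation*}
f_{k,q}=f_{0,k+q}-f_{0,k}\equiv\sum_{i=k}^{k+q-1}(b-a)^{3^{i}}=\left((b-a)\sum_{j=0}^{q-1}(b-a)^{3^{j}-1}\right)^{3^{k}}\ (\text{mod }3),
\end{equation*}
and similarly $f_{k,1}\equiv(b-a)^{3^{k}}\ (\text{mod }3)$, the last step again by Frobenius.

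Next I would feed in the construction of $h_{k,q}$. Since $q$ is prime, its only positive divisors are $1$ and $q$, so $h_{k,q}$ is obtained from $f_{k,q}$ by dividing out, to appropriate powers, the irreducible factors of the polynomials $f_{l,1}$ ($0\le l\le k$) and $f_{l,q}$ ($0\le l<k$). By Lemma \ref{l}, $f_{k,1}$ divides $f_{k,q}$, and its irreducible factors occur among those removed; since $f_{k,1}$ is monic in $\Z[a][b]$, this forces $h_{k,q}$ to divide $f_{k,q}/f_{k,1}$ in $\Z[a,b]$, with reduction modulo $3$ commuting with this exact division. Combining with the display above and with the previous lemma, which gives $h_{1,q}\equiv\sum_{j=0}^{q-1}(b-a)^{3^{j}-1}\ (\text{mod }3)$, we obtain that $h_{k,q}\ (\text{mod }3)$ divides $h_{1,q}^{3^{k}}\ (\text{mod }3)$.

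Finally, under the hypothesis that $h_{1,q}$ is irreducible in $\F_{3}[a,b]$, the polynomial $h_{1,q}^{3^{k}}\ (\text{mod }3)$ is a prime power, so every divisor of it is, up to a nonzero scalar, a power of $h_{1,q}\ (\text{mod }3)$; because $h_{k,q}$ is monic of positive $b$-degree (so its reduction modulo $3$ is monic and non-constant, $\Sigma_{k,q}$ being a genuine curve), we conclude $h_{k,q}\equiv h_{1,q}^{N_{k,q}}\ (\text{mod }3)$ for some $N_{k,q}\in\N$. I do not expect a genuine obstacle: the only steps needing a little care are the bookkeeping that primality of $q$ restricts the list of polynomials removed from $f_{k,q}$, and the monicity remarks that allow one to pass the division $f_{k,q}/f_{k,1}$ and the final conclusion through reduction modulo $3$; the computational heart is just the Frobenius identity, exactly as in Lemma \ref{k23E1}. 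The genuinely difficult point — the whole reason for this section — is that for an odd prime $q$ the hypothesis itself can fail, i.e. $h_{1,q}\ (\text{mod }3)$ need not be irreducible.
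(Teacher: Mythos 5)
Your proposal is correct and follows essentially the same route as the paper: reduce mod $3$ via Frobenius to get $f_{k,q}\equiv (b-a)^{3^{k}}\bigl(\sum_{j=0}^{q-1}(b-a)^{3^{j}-1}\bigr)^{3^{k}}$, divide out $f_{k,1}\equiv(b-a)^{3^{k}}$, and invoke the assumed irreducibility of $h_{1,q}\ (\text{mod }3)$. Your extra bookkeeping (that primality of $q$ limits the removed factors, that monicity in $b$ lets the division pass to $\F_{3}[a,b]$, and that $h_{k,q}\ (\text{mod }3)$ is non-constant so $N_{k,q}\geq 1$) only makes explicit what the paper leaves implicit.
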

    \begin{proof}
        Let $k,q\in \N, q$ prime. Then,
        \begin{align*}
            f_{k,q} & = f^{k+q}(a)- f^{k}(a)\\
            & \equiv \sum_{i=k}^{k+q-1} (b-a)^{3^{i}} (\text{mod }3)\\
            & \equiv (b-a)^{3^{k}} \left(\sum_{j=0}^{q-1} (b-a)^{3^{j}-1} \right)^{3^{k}} (\text{mod }3)
        \end{align*}
        As $f_{k,1}\equiv (b-a)^{3^{k}}(\text{mod } 3)$, we have $h_{k,q}$ divides $\left(\sum_{j=0}^{q-1} (b-a)^{3^{j}-1}\right)^{3^{k}}$ in modulo $3$. As $h_{1,q}\equiv \sum_{j=0}^{q-1} (b-a)^{3^{j}-1}( \text{mod }3)$ is irreducible modulo $3$, the lemma follows.
    \end{proof}
    The next lemma shows that this method of showing irreducibility of $h_{k,q}$ in $\Q[a,b]$, does not extend for any prime $q$ other than $2$.
    \begin{lemma}
        The polynomial $h_{1,q}(\text{mod }3)$ is irreducible in $\F_{3}[a,b] \iff q=2$.
    \end{lemma}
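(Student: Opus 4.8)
The plan is to pass to a single variable via the substitution $u = b-a$. Since $\F_3[a,b] = \F_3[a,u]$ and a polynomial that happens to lie in $\F_3[u]$ is irreducible in the larger ring exactly when it is irreducible in $\F_3[u]$ (in a nontrivial factorization in $\F_3[a,u]$, comparing $a$-degrees forces both factors to have $a$-degree zero), the previous lemma reduces the claim to showing that the one-variable polynomial
\[
    P_q(u) := \sum_{i=0}^{q-1} u^{3^i - 1} \in \F_3[u]
\]
is irreducible if and only if $q = 2$. Here $\deg P_q = 3^{q-1}-1 \geq 2$, so $P_q$ is never a unit.

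For $q = 2$ this is immediate: $P_2(u) = u^2 + 1$, which is irreducible over $\F_3$ because $-1 \equiv 2$ is not a square modulo $3$.

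For the converse I would show that $P_q$ is reducible whenever $q$ is an odd prime. The key step is to recognize
\[
    L(u) := u\, P_q(u) = \sum_{i=0}^{q-1} u^{3^i}
\]
as a trace polynomial. Indeed $L'(u) = 1$, so $L$ is separable and has exactly $3^{q-1}$ distinct roots in $\overline{\F}_3$; and for $x \in \F_{3^q}$ we have $L(x) = \sum_{i=0}^{q-1} x^{3^i} = \tr_{\F_{3^q}/\F_3}(x)$, whose zero set is the kernel of the $\F_3$-linear surjective trace map $\F_{3^q} \to \F_3$, a set of exactly $3^{q-1}$ elements. Counting forces the full set of roots of $L$ to coincide with $\ker\bigl(\tr_{\F_{3^q}/\F_3}\bigr) \subseteq \F_{3^q}$, so in particular every root of $P_q$ lies in $\F_{3^q}$. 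If $P_q$ were irreducible over $\F_3$ it would be the minimal polynomial of each of its roots, giving $\deg P_q \leq [\F_{3^q}:\F_3] = q$; but $\deg P_q = 3^{q-1}-1 > q$ for every $q \geq 3$. Hence $P_q$ is reducible, which together with the case $q=2$ yields the equivalence.

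I do not expect a serious obstacle: once $L$ is identified with $\tr_{\F_{3^q}/\F_3}$, the containment of its roots in $\F_{3^q}$ and the degree count finish the argument. The only points to keep straight are the separability of $L$ (needed so that the $3^{q-1}$ trace-zero elements exhaust the roots of $L$) and the elementary estimate $3^{q-1}-1 > q$ for $q \geq 3$.
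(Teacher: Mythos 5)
Your proposal is correct and follows essentially the same route as the paper: reduce to the one-variable polynomial $\sum_{i=0}^{q-1}x^{3^i-1}$, recognize $x\sum_{i=0}^{q-1}x^{3^i-1}=\sum_{i=0}^{q-1}x^{3^i}$ as the trace polynomial of $\F_{3^q}/\F_3$, and conclude via the degree comparison $3^{q-1}-1$ versus $q$. The only cosmetic differences are that you pin down the full root set via separability (the paper only needs one nonzero trace-zero root to force $\deg g \mid q$) and treat $q=2$ separately rather than through the single divisibility condition $3^{q-1}-1\mid q \iff q=2$.
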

    \begin{proof}
        By $\tilde{h}_{1,q},$ we will denote the image of $h_{1,q}$ under the quotient map $\Z[a,b] \rightarrow \F_{3}[a,b]$. The polynomial $\tilde{h}_{1,q}= \sum_{i=0}^{q-1}(b-a)^{3^{i}-1}$ is reducible in $\F_{3}[a,b]$ iff $g(x):=\sum_{i=0}^{q-1}x^{3^{i}-1}$ is reducible in $\F_{3}[x]$. Consider the polynomial $xg(x)= \sum_{i=0}^{q-1}x^{3^{i}}$. Consider the extension $\F_{3^{q}}$ over $\F_{3}$. The orders of the two fields in this extension imply that there are non-zero elements in $\F_{3^{q}}$, for which the trace is $0$ under the extension $\F_{3^{q}}/\F_{3}$. Any such element is a root of $xg(x)$, as $Gal(\F_{3^{q}}/\F_{3})$ is generated by the Frobenius elemnent, $x\mapsto x^{3}$. So, $g(x)$ is irreducible in $\F_{3}[x]$ if and only if $deg(g(x))$ is same as 
        $[\F_{3^{q}}:\F_{3}]=q$. Now, $deg(g(x))=3^{q-1}-1$ is equal to $q\iff q=2$. Hence, the lemma is proved.  
    \end{proof}
\section*{Acknowledgement}
    The author would like to thank C. S. Rajan, Sagar Shrivastava and Manodeep Raha, for their valuable inputs and many discussions. The author also expresses his gratitude to Ashoka University for providing a productive workspace, where this work was done. The author is also thankful to Kerala School of Mathematics for his visit there, where he had many helpful discussions with Plawan Das, Subham Sarkar, and M. M. Radhika.


\begin{thebibliography}{10}
    \bibitem[AK20]{AK20}
        Arfeux, Matthieu \& Jan Kiwi.  Irreducibility of periodic curves in cubic polynomial moduli space. (2020), Proceedings of London Mathematical Society, to appear. https://api.semanticscholar.org/CorpusID:229923967
    \bibitem[BDK91]{BDK91}
        Blanchard, Paul, Robert L. Devaney \& Linda Keen.  The dynamics of complex polynomials and automorphisms of the shift. Inventiones mathematicae 104 (1991): 545-580. https://doi.org/10.1007/BF01245090
    \bibitem[BEK18]{article1}
        Buff, Xavier, Adam L. Epstein \& Sarah Koch.  Prefixed curves in moduli space. American Journal of Mathematics 144, no. 6 (2022): 1485-1509. doi:10.1353/ajm.2022.0036.
    \bibitem[BG23]{BG23}
        Benedetto, Robert L. \& Vefa Goksel.  Misiurewicz polynomials and dynamical units, part I. International Journal of Number Theory: Vol. 19, No. 06, pp. 1249-1267 (2023). https://doi.org/10.1142/S1793042123500616
    \bibitem[BGok22]{BGok22}
        Benedetto, Robert L. \& Vefa Goksel. Misiurewicz polynomials and dynamical units, Part II. (2022) ArXiv. https://doi.org/10.48550/arXiv.2203.14431
    \bibitem[BH88]{BH88}
        Branner, Bodil \& John Hubbard. (1988). The iteration of cubic polynomials Part I: The global topology of parameter space. Acta Mathematica. 160. 143-206. 10.1007/BF02392275. 
    \bibitem[BH92]{BH92}
        Branner, Bodil \& John Hubbard. (1992). The iteration of cubic polynomials Part II: patterns and parapatterns. Acta Mathematica. 169. 229-325. 10.1007/BF02392761. 
    \bibitem[BKM10]{BKM10}
        Bonifant, Araceli, Jan Kiwi \& John W. Milnor. (2010). Cubic polynomial maps with periodic critical orbit, Part II. Conformal Geometry and Dynamics of The American Mathematical Society. 14. 68-112. 10.1090/S1088-4173-10-00204-3. 
    \bibitem[BM81]{BM81}
        Brooks, Robert \& J. Peter Matelski. The Dynamics of 2-Generator Subgroups of PSL(2, C). In: Riemann Surfaces and Related Topics, Volume 97: Proceedings of the 1978 Stony Brook Conference. (AM-97), edited by Irwin Kra and Bernard Maskit, Princeton University Press, 1981, pp. 65-72. https://doi.org/10.1515/9781400881550-007
    \bibitem[DH93]{10.1007/BF02392534}
        Douady, Adrien \& John Hubbard. (1993). A proof of Thurston's topological characterization of rational functions. Acta Mathematica. 171. 263-297. 10.1007/BF02392534. 
    \bibitem[G20]{G20}
        Goksel, Vefa.  On the orbit of a post-critically finite polynomial of the form  $x^d+c$, Functiones et Approximatio Commentarii Mathematici, Funct. Approx. Comment. Math. 62(1), 95-104, (March 2020) https://api.semanticscholar.org/CorpusID:119131690
    \bibitem[Gok20]{Gok20}
        Goksel, Vefa. A note on Misiurewicz polynomials. Journal de théorie des nombres de Bordeaux, Volume 32 (2020) no. 2, pp. 373-385. doi : 10.5802/jtnb.1126. https://jtnb.centre-mersenne.org/articles/10.5802/jtnb.1126/
    \bibitem[HT15]{HT15}
        Hutz, Benjamin \& Adam Towsley. (2015). Misiurewicz points for polynomial maps and transversality. New York Journal of Mathematics. 21. 297-319. https://nyjm.albany.edu/j/2015/21-13v.pdf
    \bibitem[LQ07]{LQ07}
        Lü, Jing \& Wei-Yuan Qiu. (2007). Local connectivity of Julia sets for a family of biquadratic polynomials. Science in China Series A: Mathematics. 50. 1477-1492. 10.1007/s11425-007-0114-4.
    \bibitem[M90]{M90}
        Milnor, John W.  Remarks on Iterated Cubic Maps.  Exp. Math. 1 (1990): 5-24.
    \bibitem[MP92]{MP92}
        Milnor, John W., and Alfredo Poirier.  Hyperbolic components in spaces of polynomial maps. arXiv preprint math/9202210 (1992).
    \bibitem[Mil09]{inbook}
        Milnor, John W. (2009). Cubic Polynomial Maps with Periodic Critical Orbit, Part I. Complex Dynamics. 10.1201/b10617-13.
    \bibitem[Mil14]{Milnor+2014+15+24}
        Milnor, John W. Arithmetic of Unicritical Polynomial Maps. Frontiers in Complex Dynamics: In Celebration of John Milnor's 80th Birthday, Princeton: Princeton University Press, 2014, pp. 15-24. https://doi.org/10.1515/9781400851317-003
    \bibitem[Pat23]{Pat23}
        Patra, Niladri. On irreducibility of prefixed algebraic sets in moduli spaces of prime degree polynomials, math.DS, 2023,  arXiv:2305.04778v2
    \bibitem[R06]{R06}
        Roesch, Pascale.  Hyperbolic components of polynomials with a fixed critical point of maximal order.  Annales Scientifiques De L Ecole Normale Superieure 40 (2006): 901-949. doi : 10.1016/j.ansens.2007.10.001.
\end{thebibliography}
\end{document}